\documentclass[11pt,reqno,oneside]{amsart}
\usepackage{graphicx} 
\usepackage[margin=1in]{geometry}
\usepackage{amsmath}
\usepackage{amsthm}
\usepackage{bm}
\usepackage[sort]{natbib}

\usepackage{algpseudocode}
\usepackage{algorithm}
\usepackage{subfigure}
\usepackage{blkarray}
\usepackage{mathtools}
\usepackage{xcolor}
\usepackage{hyperref}
\usepackage{cleveref}
\usepackage{tikz}
\usetikzlibrary{3d}
\usepackage{tikz-3dplot}
\usepackage{pgfplots}
\usepackage{textcomp}

\theoremstyle{definition}
\newtheorem{theorem}{Theorem}[section]

\newtheorem{remark}[theorem]{Remark}
\newtheorem{corollary}[theorem]{Corollary}
\newtheorem{example}[theorem]{Example}
\let\oldtextit\textit 
\renewcommand\emph[1]{\oldtextit{\color{RoyalBlue}#1}}
\definecolor{RoyalBlue}{cmyk}{1, 0.50, 0, 0}

\crefname{remark}{remark}{remarks}
\Crefname{remark}{Remark}{Remarks}

\title[Certified surface approximations]{Certified surface approximations using~the~interval~Krawczyk~test}

\begin{document}

\keywords{Certified surface approximations, 
Krawczyk operator, interval arithmetic, numerical algebraic geometry}

\begin{abstract}
We propose an algorithm to construct a certified approximation of a surface by generalizing the Krawczyk test. The Krawczyk test is based on interval arithmetic, and confirms the existence and uniqueness of a solution to a square system of analytic equations in a region.  By generalizing this test, we extend the reach of this technique to non-square systems and higher-dimensional varieties.  We provide a prototype implementation and illustrate its use on several examples.
\end{abstract}

\author{Michael Burr}
\address{Clemson University, Clemson, SC, USA}
\email{burr2@clemson.edu}

\author{Jonathan D. Hauenstein}
\address{University of Notre Dame, Notre Dame, IN, USA}
\email{hauenstein@nd.edu}

\author{Kisun Lee}
\address{Clemson University, Clemson, SC, USA}
\email{kisunl@clemson.edu}

\maketitle

\section{Introduction}

A standard problem in computational algebraic
geometry is to compute and analyze the solution
set to a system of polynomial equations.  
From a numerical perspective, one aims to 
compute numerical approximations with certificates, i.e., 
numerical proofs of the existence and uniqueness of a nearby solution.  For example, Smale's $\alpha$-theory~\cite{SmaleAlphaTheory}
analyzes Newton's method using data gathered at 
one point to provide such a certificate.  This technique has been used to develop certified homotopy
continuation trackers, e.g., see~\cite{beltran2012certified,Smale:1993,Hauenstein:2016}.  

Rather than using data from a single point, 
an alternative is to apply interval arithmetic
to understand the behavior in a region. 
The Krawczyk operator was originally introduced by 
Krawczyk~\cite{krawczyk1969newton} as a way to refine a compact region to identify solutions to systems
of equations using a Newton-like operator.
In~\cite{moore2009introduction}, Moore showed that this operator can be used to certify the existence of a solution.  Rump~\cite{rump1983solving} built on these results to show that this operator can also be used to conclude the uniqueness of a solution in a region.

Recently, the Krawczyk test was adapted and used in \cite{duff2024certified,guillemot2024validated,burr2025certified} to create certified algorithms for homotopy continuation and curve tracking.  The key insight in \cite{duff2024certified,guillemot2024validated} was that 
the Krawczyk test could be applied simultaneously 
over an interval to obtain certificates over an entire region.
In particular, such a test was used to certify the 
existence and uniqueness of a curve segment 
traveling the length of a box.  

The main theoretical contribution provided in this paper 
is that one can simultaneously apply the Krawczyk test over 
every point of a region
in any dimension for well-constrained systems.
In particular, this generalized test permits
certified approximations of surfaces, which
are the main objects of interest here.  

The rest of the paper is organized as follows.
Section~\ref{sec:GenKrawczyk} considers an interval
Krawczyk test applied to well-constrained systems
with Theorem~\ref{thm:generalized_krawczyk} providing
the main theoretical contribution.
This test is considered for 
single-sheeted and multi-sheeted graphs
in Section~\ref{sec:InitialCaseGraphs}.
Limitations of the first approach 
are reviewed in Section~\ref{sec:limitations}
along with improvements which address those limitations.
Section~\ref{sec:MainAlg} describes
the complete certified surface approximation algorithm.
Finally, Section~\ref{sec:Implementation},
considers implementation details and examples.

\section{Interval Krawczyk test}\label{sec:GenKrawczyk}
The \emph{Krawczyk test} is a commonly used tool in certified numerical algorithms as it provides the existence and uniqueness of a solution to a square analytic system within a region.  In particular, the test checks whether a Newton-like operator is contractive on a region.
Our first goal is to extend this test 
so that it can be applied to higher-dimensional varieties.  We then apply this generalization of the Krawczyk test to the problem of approximating surfaces.

\subsection{Interval arithmetic}
The Krawczyk test is based on \emph{interval arithmetic}, which is a way to compute with intervals instead of points.  Two advantages of using intervals are: (1) we can perform conservative calculations when exact values are unknown or cannot be represented on a computer and (2) we can perform calculations over and make conclusions about an entire region at once. 

Operations on intervals are defined entry-wise.
That is, for an arithmetic operator $\odot$ and intervals $I$ and $J$, we define
$$
I\odot J\vcentcolon=\{x\odot y:x\in I,y\in J\}.
$$
For example, $[a,b]+[c,d]=[a+c,b+d]$, see \cite{moore2009introduction} for more details.  Let $\mathbb{I}\mathbb{Q}^m$ be the set of axis-aligned $m$-dimensional boxes whose vertices are in $\mathbb{Q}^m$.  We may identify $(\mathbb{IQ})^m$ with $\mathbb{I}\mathbb{Q}^m$, and interchangeably use the notation $(I_1,\dots,I_m)$ or $I_1\times\cdots\times I_m$.

For a function $f:\mathbb{R}^m\rightarrow\mathbb{R}$, 
an \emph{interval extension} of $f$ is any function $\square f:\mathbb{I}\mathbb{Q}^m\rightarrow\mathbb{I}\mathbb{Q}$
where, for all $I\in\mathbb{I}\mathbb{Q}^m$, 
$$
\square f(I)\supseteq f(I)\vcentcolon=\{f(x)\mid x\in I\}.
$$
An interval extension for a system of polynomials consists of a system of extensions, one for each polynomial.

For an interval $I\in\mathbb{I}\mathbb{Q}$, we define $\|I\|=\max_{x\in I}|x|$ to be the maximum of the elements of $I$.  For a box $I=I_1\times\dots\times I_m\in\mathbb{I}\mathbb{Q}^m$, we define $\|I\|=\max_i \|I_i\|.$  

\subsection{Interval Krawczyk test}\label{sec:IntervalKrawczyk}

Let $F=(f_1,\dots,f_{n-d})$ be a system of polynomials, and suppose that their set of common zeros $X\vcentcolon=\mathcal{V}(f_1,\dots,f_{n-d})$ is a smooth pure $d$-dimensional variety in $\mathbb{R}^n$.  Let $\pi_d:\mathbb{R}^n\rightarrow\mathbb{R}^d$ be the projection onto the first  $d$ coordinates and, similarly, $\pi_{-d}$ the projection onto the last $n-d$ coordinates.  Suppose that $\pi_d|_X:X\rightarrow\mathbb{R}^d$ contains a $d$-dimensional set $U$ with nonempty interior in its image, then, for most $x\in U$, there are only finitely many $z\in X$ projecting to $x$.  We now define a test, based on the Krawczyk test, so that if $I=I_1\times\dots\times I_n\in\mathbb{I}\mathbb{Q}^n$ passes the test, then we know for each $x\in I_1\times\cdots \times I_d$, there is a unique $y\in I_{d+1}\times \cdots\times I_n$ such that~$(x,y)\in X$.

Let $\hat{z}\in\mathbb{R}^n$, $r=(r_1,r_2)$ a pair of positive numbers, and $A$ an invertible matrix.  We define the \emph{interval Krawczyk operator} as
$$
K(F,\hat{z},r,A)\vcentcolon= -A\square F(I,\pi_{-d}(\hat{z}))+(Id_{n-d}-A\square J_F(I,J))(J-\pi_{-d}(\hat{z})),
$$
where 
\mbox{$I\vcentcolon=\pi_d(\hat{z})+r_1[-1,1]^d\subseteq\mathbb{R}^d$}, 
$Id_{n-d}$ is the identity matrix of size $(n-d)\times (n-d)$, 
$J\vcentcolon=\pi_{-d}(\hat{z})+r_2[-1,1]^{n-d}\subseteq\mathbb{R}^{n-d}$, 
and $\square J_F(I,J)$ is the interval extension of the following 
$(n-d)\times(n-d)$ Jacobian submatrix, evaluated over $I\times J$
\[
\begin{bmatrix}
\frac{\partial f_i}{\partial z_j}
\end{bmatrix}_{\substack{i\in\{1,\dots,n-d\}\\j\in\{d+1,\dots,n\}}}.
\]

\begin{theorem}[Interval Krawczyk test]\label{thm:generalized_krawczyk}
Suppose $F$, $\hat{z}$, $r=(r_1,r_2)$, $A$, $I$, and $J$ are defined as above.  If there exists $\rho\in(0,1)$ with
\[
K(F,\hat{z},r,A)\subset \rho\,(J-\pi_{-d}(\hat{z})),
\]
then, for every $\hat{x}\in I$, there exists a unique
$y^*\in J$ such that $F(\hat{x},y^*)=0$
with $\|y^*-\pi_{-d}(\hat{z})\|\le r_2\rho$. In this case, we say that $\hat{z}$ is a \emph{$\rho$-approximate solution} to $F$ with \emph{certification~radius}~$r$.
\end{theorem}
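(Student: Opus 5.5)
Fix an arbitrary $\hat{x}\in I$ and consider the square system $G(y)\vcentcolon=F(\hat{x},y)$ in the $n-d$ unknowns $y$. The plan is to show that the Newton-like map
\[
\Phi(y)\vcentcolon= y - A\,G(y)
\]
maps $J$ into itself, is a contraction on $J$, and that its image of $J$ lies within distance $r_2\rho$ of $c\vcentcolon=\pi_{-d}(\hat{z})$. Banach's fixed point theorem then gives a unique fixed point $y^*\in J$. Since $A$ is invertible, fixed points of $\Phi$ are exactly the zeros of $G$ in $J$, which gives both existence and uniqueness. Because $\hat{x}$ is arbitrary, this proves the statement for every $\hat{x}\in I$.

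\textbf{Key step: an interval enclosure of $\Phi(y)-c$.} For $y\in J$, the mean value theorem applied row by row gives
\[
G(y)=G(c)+M(y-c),
\]
where the $i$-th row of $M$ is $\frac{\partial f_i}{\partial y}(\hat{x},\xi_i)$ for some $\xi_i$ on the segment $[c,y]\subseteq J$. The box $J$ is convex, so every entry of $M$ lies in the corresponding entry of $\square J_F(I,J)$; this is because $(\hat{x},\xi_i)\in I\times J$ and interval extensions are conservative. It follows that
\[
\Phi(y)-c=-A\,G(c)+(Id_{n-d}-AM)(y-c).
\]
Moreover $G(c)=F(\hat{x},c)\in \square F(I,c)$, and $y-c\in J-c$. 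By inclusion monotonicity of interval arithmetic, $\Phi(y)-c\in K(F,\hat{z},r,A)\subset \rho(J-c)$. Hence $\Phi(J)\subseteq c+\rho r_2[-1,1]^{n-d}\subseteq J$, and any fixed point $y^*$ satisfies $\|y^*-c\|\le r_2\rho$. Continuity of $\Phi$ together with Brouwer's theorem would already give existence at this point.

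\textbf{Contraction, the main obstacle.} For $y,y'\in J$, the same mean value argument gives
\[
\Phi(y)-\Phi(y')=(Id_{n-d}-AM')(y-y')
\]
with $M'\in\square J_F(I,J)$. I need $\|Id-AM'\|_\infty<1$ for every such $M'$. To get it, use the hypothesis on the linear term alone. Since $0\in\square F(I,c)$ is not guaranteed, I cannot simply drop the first term. Instead I argue componentwise. The $k$-th component of $K$ is a Minkowski sum of the interval $-(A\square F)_k$ and $\sum_j (Id-A\square J_F)_{kj}\,r_2[-1,1]$. Its containment in $\rho r_2[-1,1]$ forces the width of the second summand to be at most $2\rho r_2$. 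That summand is the symmetric interval $r_2\sum_j\|(Id-A\square J_F)_{kj}\|\,[-1,1]$, so
\[
\sum_j\|(Id-A\square J_F)_{kj}\|\le\rho,
\]
and hence $\|Id-AM'\|_\infty\le\rho<1$. This width-comparison step is where care is needed, since the interval product must be computed as the symmetric hull. With the contraction established, Banach's theorem applies on the complete set $J$, uniqueness in $J$ follows, and the proof is complete.
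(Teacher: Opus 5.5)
Your proof is correct and follows the paper's route: fix $\hat{x}\in I$, pass to the square slice system $G(y)=F(\hat{x},y)$, and use that its Krawczyk enclosure lies inside $K(F,\hat{z},r,A)$. The only difference is that the paper then cites the standard Krawczyk theorem, whereas you reprove it inline (mean value enclosure of $\Phi(y)-\pi_{-d}(\hat{z})$, the width comparison giving $\sum_j\|(Id_{n-d}-A\square J_F(I,J))_{kj}\|\le\rho$, then Banach); a small side benefit is that you need only the enclosure property of $\square F$ and $\square J_F$, not inclusion isotonicity of these extensions.
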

\begin{proof}
Let $\hat{z}=(\hat{x},\hat{y})\in\mathbb{R}^n$ and define $G(y)\vcentcolon=F(\hat{x},y)$.  
Since interval arithmetic is inclusion isotone, it immediately
follows that \mbox{$K(G,\hat{y},r,A)\subset K(F,\hat{z},r,A)$}, where
\[K(G,\hat{y},r,A)\vcentcolon=-AG(\hat{y})+(Id_{n-d}-A\square J_G(J))(J-\hat{y}).\]
When the containment assumption of the theorem holds, we know 
that $K(G,\hat{y},r,A)\subset \rho(J-\pi_{-d}(\hat{z}))=\rho(J-\hat{y})$. Then, by the standard Krawczyk test \cite{moore2009introduction,duff2024certified,guillemot2024validated}, the existence and uniqueness of a solution to~$G$ follows. Since $\hat{x}$ is arbitrary from $I$, the result follows.
\end{proof}

\begin{remark}\label{rem:Achoice}
In many applications, a good choice of $A$ is to approximate the inverse of a matrix in $\square J_F(I,J)$.  Therefore, a natural choice for $A$ is to approximate
\[
\begin{bmatrix}
\frac{\partial f_i}{\partial z_j}(\hat{z})
\end{bmatrix}^{-1}_{\substack{i\in\{1,\dots,n-d\}\\j\in\{d+1,\dots,n\}}}
\]
\end{remark}

We turn \Cref{thm:generalized_krawczyk} into the \emph{IntervalKrawczykTest} in Algorithm~\ref{algo:Krawczyk-test}.  Since both $K(F,\hat{z},r,A)$ and $J-\pi_{-d}(\hat{z})$ are intervals centered at zero and $\rho(J-\pi_{-d}(\hat{z}))=[-\rho,\rho]^{n-d}$, the final test of Algorithm~\ref{algo:Krawczyk-test} is equivalent to the inclusion in \Cref{thm:generalized_krawczyk}.

\algrenewcommand\algorithmicrequire{\textbf{Input}:}
\algrenewcommand\algorithmicensure{\textbf{Output}:}

\begin{algorithm}[ht]
	\caption{IntervalKrawczykTest}
 \label{algo:Krawczyk-test}
\begin{algorithmic}[1]
\Require  
\begin{itemize}
    \item A polynomial system $F=(f_1,\dots, f_{n-d}):\mathbb{R}^n\rightarrow \mathbb{R}^{n-d}$,
    \item a point $z=(z_1,\dots, z_n)\in \mathbb{Q}^n$,
    \item a vector of positive rational numbers $r\in \mathbb{Q}^2$,
    \item an invertible matrix $A\in\mathbb{Q}^{(n-d)\times (n-d)}$, and 
    \item $\rho\in (0,1)$.
\end{itemize}
\Ensure A boolean, the value of interval Krawczyk test.
\State {Set $I=\pi_d(z)+r_1[-1,1]^d$ and $J=\pi_{-d}(z)+r_2[-1,1]^{n-d}$.}
\State {Set $K=-A\square F(I,\pi_{-d}(z))+(Id_{n-d}-A\square J_F(I,J))(J-\pi_{-d}(z))$.}
\State {\Return $\|K\|< r_2\rho$.}
\end{algorithmic}
\end{algorithm}

\begin{remark}
    Although the focus is on real varieties in this paper, it is straight-forward to extend the results to complex varieties by considering $\mathbb{C}^n$ as $\mathbb{R}^{2n}$, and modifying the definitions appropriately.  In addition, we may relax the condition that $X$ is smooth, since it only needs to be smooth where the test is applied.
\end{remark}

\section{An initial case: graphs}\label{sec:InitialCaseGraphs}

As a first step to show the power of this interval Krawczyk test, we consider the case 
when $X$ is a graph.  The goal of this section is to provide a simple algorithm that correctly approximates the graph $X$.  It provides a motivating approach whose drawbacks are addressed in subsequent sections.  Since the theory is identical in both the surface and higher-dimensional cases, we present the material in full generality even though we reduce to the surface case in subsequent sections.  We also note that we do not address the question of deciding whether a variety is a graph over some domain.

Suppose that $X$ is defined as in \Cref{sec:IntervalKrawczyk} and $U\subseteq\mathbb{R}^d$.  We say that $X$ is a \emph{graph} over $U$ if for every $x\in U$, the fiber $X\cap\pi_d^{-1}(x)$ has exactly one point.  In addition, we say that $X$ is \emph{regular} over $U$ if for all $z\in X\cap\pi_d^{-1}(U)$, the Jacobian \[
\begin{bmatrix}
\frac{\partial f_i}{\partial z_j}(z)
\end{bmatrix}_{\substack{i\in\{1,\dots,n-d\}\\j\in\{d+1,\dots,n\}}}
\]
is nonsingular.  This implies, in particular, that the tangent space of $X$ at $z$ does not include any nonzero vectors parallel to the direction of projection.  In this case, by the implicit function theorem, $X\cap\pi_d^{-1}(U)$ is homeomorphic to $U$.  Our goal in this section is to build an enclosure of $X\cap\pi_d^{-1}(U)$ that is arbitrarily close in Hausdorff distance and deformation retracts to $X\cap\pi_d^{-1}(U)$.

\subsection{Graph approximations}
We begin by specializing our problem.  Suppose that each defining polynomial has rational coefficients, i.e., $f_i\in\mathbb{Q}[z_1,\dots,z_n]$, $U$ is a box in $\mathbb{I}\mathbb{Q}^d$, and $X$ is a graph over $U$.  Moreover, we fix a value for $\rho\in(0,1)$.  By scaling variables, we immediately reduce to the case where $U$ is a cube, i.e., all the sides of $U$ are the same length.

We follow a subdivision-based approach on $U$, where a subregion is processed and either accepted as containing the surface or rejected and further refined.  Suppose that $B\subseteq U$ is a cube in $\mathbb{I}\mathbb{Q}^d$ of side length $2r_1$, then the center $\hat{x}$ of the box is in $\mathbb{Q}$, and we can use one of many certified multivariate real root solvers, see, e.g., \cite{Yap:MirandaTest:2020,Cheng:CertifiedRealIsolation:2022}, to approximate the solution $\hat{y}$ in the fiber to high precision.  Then $\hat{z}=(\hat{x},\hat{y})$ is a point near $X$, and we use this point in the interval Krawczyk test, \Cref{thm:generalized_krawczyk}.  We choose $A$ as the inverse of the Jacobian at $\hat{z}$, as in Remark \ref{rem:Achoice}.  Thus, we only need a value for $r_2$ to apply the Krawczyk test.

The choice of $r_2$ is more subtle than the remaining parameters.  When performing the interval Krawczyk test, the base region $B$ is given (and takes the role of $I$ in \Cref{thm:generalized_krawczyk}).  On the other hand, $J$ is built from $r_2$.  In order for the interval Krawczyk test to succeed, it is necessary (but not sufficient) for $B\times J$ to contain $\pi_d^{-1}(B)$.  From our setup, there are no explicit bounds given on the magnitude of the slopes appearing in $X$.  While there is an upper bound due to compactness, we do not know this bound {\em a priori}.  Therefore, during the subdivision process, we choose $r_2$ so that it approaches zero at a slower rate than $r_1$ under subdivision.  With such a choice, the box $B\times J$ eventually contains $\pi_d^{-1}(B)$, and, moreover, we can prove that the algorithm terminates.

We combine these choices into the \emph{GraphApproximation} algorithm, Algorithm \ref{algo:Graph-approx}.  In this algorithm, upon failure of the Krawczyk test on box $B$, $B$ is split into $2^d$ sub-boxes formed by splitting each of the sides of $B$ in half.

\begin{algorithm}[ht]
	\caption{GraphApproximation}
 \label{algo:Graph-approx}
\begin{algorithmic}[1]
\Require  
\begin{itemize}
    \item A polynomial system $F=(f_1,\dots, f_{n-d}):\mathbb{R}^n\rightarrow \mathbb{R}^{n-d}$,
    \item a box $U\in\mathbb{I}\mathbb{Q}^d$ such that $\mathcal{V}(F)$ is a graph over $U$,
    \item $\rho\in(0,1)$.
\end{itemize}
\Ensure A collection of boxes covering the graph of $F$ over $U$.
\State{Apply a linear scaling so that $U$ is a cube of side length $2$}
\State{Initialize queue $\mathcal{Q}$ to contain $U$}
\While{$\mathcal{Q}$ is not empty}
\State{Pop $B$ from $\mathcal{Q}$}
\State{Set $2^{s_1+1}$ to be the side length of $B$}
\State{Set $s_2=\lceil s_1/2\rceil$}
\State{Set $\hat{x}$ to be the midpoint of $B$}
\State{Set $\hat{y}$ to be an approximation of the zero of $F$ over $\hat{x}$ of accuracy at least $2^{s_2}$}\label{hat_y_approx}
\State {Set $\hat{z}=(\hat{x},\hat{y})$}
\State{Set $\hat{A}$ to be an approximation of the inverse Jacobian at $\hat{z}$}\label{hat_A_approx}
\If{IntervalKrawczykTest$(F,\hat{z},(2^{s_1},2^{s_2}),\hat{A},\rho)=\textsc{True}$}\label{line:intervalKrawczyk}
\State{{\bf report} $B\times (\hat{y}+2^{s_2}[-1,1]^{n-d})$ and continue algorithm}\label{line:output}
\Else
\State{Subdivide $B$ and add its children to $\mathcal{Q}$}
\EndIf
\EndWhile
\end{algorithmic}
\end{algorithm}

We observe that, the side length of every $B$ considered in this algorithm is at most $2$, so $s_1\leq 0$ whenever it appears.  Therefore, when approximating $\hat{y}$ in Step \ref{hat_y_approx}, the accuracy of $\hat{y}$ implies that the point $\pi_d^{-1}(\hat{x})$ is in the box constructed in the interval Krawczyk test.  Moreover, $\hat{y}$ and $\hat{A}$ in Steps \ref{hat_y_approx} and \ref{hat_A_approx}, respectively, must be approximated with enough accuracy so that the error in the computation of the interval Krawczyk test becomes negligible as the number of subdivisions increases.  Details of this requirement appear in the following correctness theorem.

\begin{theorem}\label{thm:graphcorrectness}
Let $F$, $X$, $U$, and $\rho\in(0,1)$ be as above.  
In particular,~$X$ is a graph over $U$.  If the accuracy of the approximations of $\hat{y}$ and $\hat{A}$ converge sufficiently quickly, then the set of boxes output by the GraphApproximation algorithm, Algorithm \ref{algo:Graph-approx}, deformation retracts onto $X\cap\pi^{-1}_d(U)$.
\end{theorem}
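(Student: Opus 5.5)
The proof has two parts: termination of Algorithm~\ref{algo:Graph-approx}, and the topology of the output.

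\textbf{Termination.} The aim is to show that there is a uniform depth $s_1^*$ such that every box $B$ with $s_1\le s_1^*$ passes the IntervalKrawczykTest. Then the subdivision tree is finite and the accepted boxes $B$ tile $U$.

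To see this, write the operator as
\[
K=-\hat{A}\,\square F(B,\hat{y})+(Id_{n-d}-\hat{A}\,\square J_F(B,J))(J-\hat{y}).
\]
By regularity and compactness of $X\cap\pi_d^{-1}(U)$, the Jacobian submatrix has a uniformly bounded inverse near $X$. The polynomials $F$, $J_F$ are Lipschitz on a compact neighborhood, and standard interval extensions (e.g.\ centered forms) have width $O(\text{diameter})$. The accuracy assumption is that $\hat{y}$ is within $o(2^{s_2})$ of the true fiber point $y(\hat{x})$ and that $\hat{A}$ approximates the inverse Jacobian within $o(1)$.

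\emph{First term.} $F(\hat{x},\hat{y})$ is small, and varying $x$ over $B$ changes $F$ by $O(2^{s_1})$. Hence
\[
\|\hat{A}\,\square F(B,\hat{y})\|=O(2^{s_1})+o(2^{s_2}).
\]

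\emph{Second term.} $B\times J$ has diameter $O(2^{s_2})$, so $\|Id-\hat{A}\,\square J_F(B,J)\|=O(2^{s_2})+o(1)$. Multiplying by $\|J-\hat{y}\|=2^{s_2}$ gives $o(2^{s_2})$.

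\emph{Conclusion.} Since $s_2=\lceil s_1/2\rceil$, we have $2^{s_1}=o(2^{s_2})$. Therefore $\|K\|<\rho\, 2^{s_2}$ for all sufficiently deep boxes.

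This scaling bookkeeping is the main obstacle. One has to make precise what ``sufficiently quickly'' means for the accuracies; for instance, require error $\le c\,2^{s_1}$ for $\hat{y}$ and $\le c\,2^{s_2}$ for $\hat{A}$. One must also check that the linear term in $x$ is absorbed only because $r_2$ decays more slowly than $r_1$, which is exactly why $s_2=\lceil s_1/2\rceil$ was chosen.

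\textbf{Topology.} Every reported box $B\times J_B$ passed the test. By Theorem~\ref{thm:generalized_krawczyk}, for each $x\in B$ the unique fiber point $y(x)$ lies in $J_B$, and the graph over $B$ is contained in $B\times J_B$. The union $W$ of the output boxes therefore contains $X\cap\pi_d^{-1}(U)$, and $\pi_d(W)=U$ because the accepted $B$ tile $U$.

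Next, fix $x\in U$ and look at the vertical fiber $W_x=W\cap\pi_d^{-1}(x)$. It is the union of the boxes $J_B$ over the accepted $B$ containing $x$ (more than one only on shared faces). Each such $J_B$ is a box containing $y(x)$, so $W_x$ is a union of boxes all containing the common point $y(x)$. This union is star-shaped with respect to $y(x)$.

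Define the homotopy
\[
H_t(x,y)=(x,\,(1-t)y+t\,y(x)).
\]
It stays inside $W_x$ by star-shapedness, fixes the graph pointwise, and is continuous because $y(x)$ is continuous by the implicit function theorem. Hence $H$ is a strong deformation retraction of $W$ onto $X\cap\pi_d^{-1}(U)$.

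The Hausdorff-closeness claim follows in the same way: every output box has height $2^{s_2+1}$, and this can be forced small by capping the largest allowed box size.
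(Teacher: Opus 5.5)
Your proof is correct and has the same skeleton as the paper's: each accepted box contains the graph over its base by Theorem~\ref{thm:generalized_krawczyk}, a vertical retraction gives the deformation retraction, and the algorithm terminates because small boxes pass the test. You justify both steps differently, though.

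For termination, the paper argues by contradiction. It fixes a point $x^\ast$ all of whose containing boxes are subdivided, notes that $\square F(I,\hat y)$ and $Id_{n-d}-A\,\square J_F(I,J)$ tend to zero as these boxes shrink toward $z^\ast$, and then uses compactness of $U$ to turn this local statement into a global depth bound. You instead get a uniform depth $s_1^\ast$ in one step, from uniform Lipschitz and inverse-Jacobian bounds near the compact graph.

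You also do the scaling that the paper leaves implicit. Since the acceptance threshold $\rho\,2^{s_2}$ itself shrinks, ``tends to zero'' is not enough; one needs $\|\hat A\,\square F(B,\hat y)\|=o(2^{s_2})$ and $\|Id_{n-d}-\hat A\,\square J_F(B,J)\|=o(1)$. This is exactly where $2^{s_1}=o(2^{s_2})$ (from $s_2=\lceil s_1/2\rceil$), a first-order interval extension, and explicit accuracy rates for $\hat y$ and $\hat A$ come in. The paper's sketch relies on these hypotheses without stating them. For instance, the accuracy $2^{s_2}$ requested for $\hat y$ in Algorithm~\ref{algo:Graph-approx} would not suffice on its own, since $K$ contains $-\hat A F(\hat x,\hat y)$.

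For the topology, the paper simply restricts the deformation retraction of $U\times\mathbb{R}^{n-d}$ to the union of output boxes. Your observation that each fiber $W_x$ is a union of boxes $J_B$, all containing $y(x)$, is precisely what guarantees the straight-line homotopy stays inside the union, including over shared faces of the tiling.

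The paper's version is shorter. Yours makes ``sufficiently quickly'' concrete and verifies the step the paper only asserts.
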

\begin{proof}
By the interval Krawczyk test, \Cref{thm:generalized_krawczyk}, each box $B\times J$ in the output of the algorithm contains the portion of $X$ over~$B$.  In addition, since $U\times\mathbb{R}^{n-d}$ deformation retracts to $X\cap\pi^{-1}_d(U)$ and the output boxes partition $U$ (ignoring overlaps on boundaries), this deformation, restricted to the output boxes, also deformation retracts onto $X\cap\pi^{-1}_d(U)$.  Therefore, to complete this correctness argument, it is enough to show that Algorithm \ref{algo:Graph-approx} terminates.

Suppose, for contradiction, that $U$ is subdivided infinitely many times and $z^\ast=(x^\ast,y^\ast)\in X$ where $x^\ast\in U$ is a point such that every box $B$ containing $x^\ast$ is subdivided.  Note that as $s_1$ and $s_2$ go to~$0$, the interval in the interval Krawczyk test converges to~$z^\ast$, see \Cref{line:intervalKrawczyk}.  
Hence, both $\square F(I,\pi_{-d}(z))$ and $(Id_{n-d}-A\square J_F(I,J))$ in Algorithm \ref{algo:Krawczyk-test} approach zero.
This yields a contradiction since
the interval Krawczyk test then succeeds
when these values are sufficiently small, i.e., when the 
box size is sufficiently small.
Therefore, there is a lower bound on the size of any box~$B$ containing $x^\ast$ which must be subdivided.  Since $U$ is a compact region, there is a global upper bound on this side length, and so all sufficiently small boxes in the queue must pass the interval Krawczyk test. Hence, the algorithm terminates, which completes the proof. 
\end{proof}

\subsection{Extensions}\label{sec:extension}
We discuss two extensions of this approach: (1) decreasing the Hausdorff distance between the approximation and the variety and (2) applying the approach where $X$ consists of several sheets.

{\em Refinement.}
For many applications, it is important to have more accurate approximations of the variety $X$.  There are a few changes that we can make 
Algorithm~\ref{algo:Graph-approx} to make the resulting collection of boxes as close to $X$ as desired.  
Since \Cref{thm:generalized_krawczyk}
provides that $\|y^\ast-\pi_{-d}(\hat{z})\|\leq r_2\rho$,
we can replace the output on Step~\ref{line:output} by $B\times (\hat{y}+\rho 2^{s_2}[-1,1]^{n-d})$.  If these boxes are still too large, we can force the regions $B$ to be smaller and/or replace $\rho$ by a smaller positive value.  

{\em Multi-sheet approximations.}
A slight weakening of the conditions above 
allows the GraphApproximation algorithm, Algorithm \ref{algo:Graph-approx}, to apply when $X$ consists of a union of $k$ graphs over $U$.  In this case, every $\pi^{-1}(x)$ consists of exactly $k$ points, one for each sheet of $X$ over $U$.  The most critical change is that, in Step~\ref{hat_y_approx}, the isolation must be completed for all $k$ points over $\hat{x}$, not just one.  This change allows each sheet of $X$ to be approximated, individually, but there is a potential global issue in that the boxes for approximations for different sheets might intersect.  To avoid this issue, we must continue to refine the subdivision until any two boxes over the same point $x\in U$ do not intersect, except possibly on their sides.  
Since the proof of \Cref{thm:graphcorrectness} extends almost directly to this case, we 
collect this result here.

\begin{corollary}\label{cor:uniongraphcorrectness}
Let $F$, $X$, $U$, and $\rho\in(0,1)$ be as above.  In particular,~$X$ is a union of graphs over $U$.  If the accuracy of the approximations of each $\hat{y}$ and $\hat{A}$ converge sufficiently quickly, then the set of boxes output by the modification to the GraphApproximation algorithm described here deformation retracts onto $X\cap\pi^{-1}_d(U)$.
\end{corollary}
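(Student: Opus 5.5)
The plan is to reduce to the proof of \Cref{thm:graphcorrectness}, applied sheet by sheet, and then handle the one new global ingredient: output boxes belonging to different sheets must eventually stop overlapping. Write $X\cap\pi_d^{-1}(U)=\bigcup_{j=1}^k X_j$, where each $X_j$ is the graph of a continuous (indeed analytic, by regularity and the implicit function theorem) map $g_j:U\to\mathbb{R}^{n-d}$. Since every fiber has exactly $k$ points, the sheets are pairwise disjoint. Because $U$ is compact, we get a uniform separation
\[
\delta\vcentcolon=\min_{i\neq j}\min_{x\in U}\|g_i(x)-g_j(x)\|>0 .
\]

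First we check that each reported box is correct. For a box $B$ and the $j$-th approximate fiber point $\hat{y}_j$, passing the test in \Cref{thm:generalized_krawczyk} shows that $B\times J_j$ contains, over each $x\in B$, exactly one point of $X$. Since $\hat{y}_j$ approximates $g_j(\hat{x})$ and $g_j(\hat x)\in J_j$, continuity of $g_j$ on the connected set $B$ together with uniqueness forces that point to be $g_j(x)$. Hence $B\times J_j$ encloses $X_j\cap\pi_d^{-1}(B)$ and meets no other sheet.

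Next we prove termination. For each fixed $j$, the contradiction argument of \Cref{thm:graphcorrectness} applies verbatim at a limit point $z^*=(x^*,g_j(x^*))$. As $s_1,s_2\to 0$, both $\square F(I,\hat y_j)$ and $Id-A\square J_F$ shrink to zero, using regularity at $z^*$ and the assumed rate at which $\hat y_j$ and $\hat A$ become accurate. So there is a uniform side length below which all $k$ tests pass. For the separation condition, each box has height $2^{s_2+1}$ with $s_2\to-\infty$. Once $2^{s_2+1}<\delta/2$ and $\hat y_i$ lies within $2^{s_2}$ of $g_i(\hat x)$, two boxes over the same point $x$ for $i\neq j$ are disjoint. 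A cleaner route is to use the refined boxes $\hat y+\rho2^{s_2}[-1,1]^{n-d}$, which contain $g_i(x)$ for all $x\in B$; then the centers differ by at least $\delta-O(2^{s_2})$. Compactness again gives a global threshold, so the algorithm terminates.

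Finally, the deformation retraction is built sheetwise. The output consists of $k$ families of boxes. Family $j$ covers $U$ (up to boundary overlaps), and its union $W_j$ is a union of vertical slabs $B\times J_j$. Straight-line retraction along fibers, $(x,y)\mapsto (x,(1-t)y+tg_j(x))$, keeps each slab in itself because $J_j$ is convex and contains $g_j(x)$. It also agrees on shared faces, since it depends only on $(x,y)$ and the index $j$. Separation makes the $W_j$ pairwise disjoint except possibly on sides, so the $k$ retractions glue into one retraction onto $\bigcup_j X_j$.

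The main obstacle is exactly this gluing step. If boxes from different sheets touch along a side, a point there could be assigned two targets $g_i(x)\neq g_j(x)$. I would first try requiring strict disjointness, that is, continuing the refinement until boxes over a common $x$ have disjoint closures. The bound from $\delta$ guarantees this happens after finitely many subdivisions. If only side contact is allowed, I would argue such contact cannot occur over a common $x$ once $2^{s_2+1}<\delta/2$.
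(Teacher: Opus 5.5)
Your proposal is correct and follows the paper's route: the paper only asserts that the proof of Theorem~\ref{thm:graphcorrectness} extends almost directly once the subdivision is refined until boxes of different sheets over a common base point separate, and your separation constant $\delta$, the connectedness argument that each accepted box captures exactly one sheet, and the fiberwise straight-line homotopy supply the details it leaves implicit. Your insistence on strict disjointness is the right requirement: the paper's allowance of contact ``on their sides'' between boxes of different sheets would merge distinct components of the union and rule out a deformation retraction, but, as you note, such contact cannot occur once $2^{s_2+1}<\delta/2$.
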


\section{Discussion of limitations}\label{sec:limitations}

The approximation algorithm presented in \Cref{algo:Graph-approx}, while certified, has some weaknesses.  The following addresses 
several of them.
The removal of each weakness improves the reach of the algorithm and the quality of the output, but the changes often require more care to maintain the correctness statement.  
First, the choice of splitting the box $B$ into $2^d$ sub-boxes when the Krawczyk test fails is potentially inefficient when $d$ is large.  Since the main target for this paper consists of surfaces, where $d=2$ and $2^d=4$, we do not address the issues for higher dimensions here.

Second, since the algorithm is affected by the magnitude of the slopes appearing in $X$, the boxes resulting from \Cref{algo:Graph-approx} may have very poor aspect ratios.  This issue is particularly visible because this algorithm cannot be used to approximate closed varieties since there are always extreme points where $X$ is not regular over~$U$.  This issue cannot be immediately solved by approximating the variety $X$ over different patches and projections since one has to correctly connect the different patches.  We focus our attention on this second set of issues throughout the subsequent sections.

Third, the algorithm to find an approximation $\hat{y}$ appearing in Step \ref{hat_y_approx} of Algorithm \ref{algo:Graph-approx} is unlikely to be efficient, as stated.  Both of the suggested methods search for zeros within a given region.  In our case, the fiber $\pi_d^{-1}(\hat{x})$ is $(n-d)$-dimensional and contains a unique intersection with $X$.  We must apply multivariate root isolation techniques to iteratively larger regions until the unique zero is found.  While this works theoretically, it is expected to be quite inefficient in practice.  However, the introduction of the patching to solve the second issue above also provides an opportunity to replace this inefficient technique.

\subsection{Changing coordinate systems}

The main technique to improve \Cref{algo:Graph-approx} is to change the coordinates of the variety $X$ so that the magnitude of the slope is never large.  By performing such transformations, we can mitigate 
the scaling issues identified above.
We can make a change of coordinates using an appropriate unitary transformation.  
However, the use of such transformations make it so the proof of \Cref{thm:graphcorrectness} no longer applies.  In particular, there is no longer a single region~$U$ being partitioned.  After introducing the unitary transformation that we use
in Section~\ref{sec:Unitary}, we discuss the required changes to the topological correctness statement
in Section~\ref{sec:TopCorrect}.

\subsection{Unitary transformation}\label{sec:Unitary}

We propose to use the ideas of \Cref{sec:InitialCaseGraphs} to approximate $X$ but change the coordinates,
as needed, 
so that issues regarding large magnitudes
identified above are mitigated.  
In particular, we propose to use a unitary transformation to rotate the variety $X$ so that, locally, the tangent space is nearly parallel to the span of $d$ coordinate vectors.  This guarantees that, locally, the magnitude of the derivative is small, so the boxes with poor aspect ratios are unlikely to occur.

Suppose that $\hat{z}=(\hat{x},\hat{y})$ is a point near to the variety $X$.  Let $U\Sigma V^\ast$ be the
singular value decomposition (SVD) 
of $J_F(\hat{z})$.  Using this SVD and the change of coordinates $\tilde{z}=V^\ast z$, we define $\tilde{F}(\tilde{z})\vcentcolon=U^\ast F(V\tilde{z})$. 
Observe that, after this transformation,  
    $$
    J_{\tilde{F}}(\tilde{z})= U^* J_F(z)V=U^* U\Sigma V^* V=\Sigma.
    $$
Since there are $n-d$ polynomials defining $X$, the last $d$ columns of $\Sigma$ are zero.  Moreover, since $X$ was assumed to be smooth and $\hat{z}$ is near to $X$, the Jacobian is full rank.  Putting these observations together implies that the kernel $\ker J_{\tilde{F}}(\tilde{x})$ is spanned by $e_{n-d+1},\dots,e_n$, i.e., the last $d$ coordinate vectors.  In addition, when $z^\ast$ is a point on $X$, then $V^\ast z^\ast$ is a point on the rotated copy of $X$.  We collect these steps in the \emph{UnitaryTransformations} algorithm, Algorithm \ref{algo:transform}

\begin{algorithm}[ht]
\caption{UnitaryTransformation}
\label{algo:transform}
\begin{algorithmic}[1]
\Require  
\begin{itemize}
    \item A polynomial system $F=(f_1,\dots, f_{n-d}):\mathbb{R}^n\rightarrow\mathbb{R}^{n-d}$
    that defines a smooth variety,
    \item a point $\hat{z}\in \mathbb{R}^n$ approximating a solution to $F$.
\end{itemize}
\Ensure \begin{itemize}
    \item A transformed polynomial system $\tilde{F}$,
    \item a transformed point $\tilde{z}$, and
    \item unitary matrices $U$ and $V^*$.
\end{itemize} 
\State {Compute the SVD of $J_F(\hat{z})=U\Sigma V^*$\\
Set $\tilde{F}(z)=U^* F(Vz)$.\\
Set $\tilde{z}=V^* \hat{z}$.\\
\Return $(\tilde{F},\tilde{z}, U, V^*)$.}
\end{algorithmic}
\end{algorithm}

We note that \Cref{algo:transform} requires the computation of the SVD. For rigorous computation, we perform this computation using interval arithmetic, so $U$, $\Sigma$, and $V^*$ are all represented by interval matrices with arbitrarily small intervals.  In this case, the diagonal structure and the columns of zeros of $\Sigma$ are both maintained. Since these matrices can be made with arbitrarily small intervals, any subsequent calculation can be made as precise as needed.

\begin{remark}
At this point, there is a notational difficulty that must be addressed.  In Section \ref{sec:InitialCaseGraphs}, we based the construction on the first $d$ coordinate vectors.  On the other hand, due to the way SVD is typically defined, the key vectors are the last $d$ coordinate vectors.  Whenever we use the algorithms of Section \ref{sec:InitialCaseGraphs} on a transformed variety, we assume that the order of the coordinates has been changed so that the last $d$ coordinates from the SVD representation are treated as the first $d$ coordinates for the algorithms of Section \ref{sec:InitialCaseGraphs}.
\end{remark}

\subsection{Topological correctness}\label{sec:TopCorrect}

From this point forward, we focus on the case where $X$ is a surface, i.e., $d=2$.  While many of the ideas discussed are not dependent on dimension, it is significantly more notationally challenging to describe these operations in full generality.  We leave these details to the interested reader.

The challenge that the unitary transformation introduces is that the topological correctness proof of Theorem \ref{thm:graphcorrectness} no longer applies.  To illustrate this issue, suppose that the box $I\times J$ passes the interval Krawczyk test applied to $X$.  On the other hand, suppose that $\tilde{I}\times\tilde{J}$ is a box in transformed coordinates which passes the interval Krawczyk test on $\tilde{X}=\mathcal{V}(\tilde{F})$, i.e., the transformed version of $X$.  Putting these two boxes in the original coordinate system, 
$\tilde{I}\times\tilde{J}$ remains a box but need not be axis-aligned.  By a slight abuse of notation, we continue to use the product notation for $\tilde{I}\times\tilde{J}$, even though it is not a product in the same coordinate system as $I\times J$.  By the properties of the interval Krawczyk test, each of the boxes $I\times J$ and $\tilde{I}\times\tilde{J}$ contain a portion of $X$.  Moreover, if the boxes $I\times J$ and $\tilde{I}\times\tilde{J}$ overlap, we must decide whether the two portions of $X$ in these boxes represent, locally, the same sheet or two different sheets. 

We begin by describing the structure of $X$ within a box that passes the interval Krawczyk test.  Suppose  that the box $I\times J$ passes the interval Krawczyk test, Algorithm \ref{algo:Krawczyk-test}, applied to $X$.  In this case, $I$ is a square and $J$ is $(n-2)$-dimensional.  We call $I\times\partial J$ the \emph{top} (or \emph{bottom}) of $I\times J$, and, similarly, we call $\partial I\times J$ the \emph{sides} of $I\times J$.  There are four sides to $I\times J$, corresponding to the four sides of $I$.  We observe, by \Cref{thm:generalized_krawczyk}, that the surface $X$ cannot intersect the top of $I\times J$.  This, in turn, implies that, locally, $X$ is a graph over $I$.  In addition, this observation implies that $X$ can only leave $I\times J$ through its sides.  In particular, the intersection of $X$ with a side of $I\times J$ is a curve.

Suppose that $I\times J$ and $\tilde{I}\times\tilde{J}$ are two overlapping boxes for which the interval Krawczyk test succeeds in their own coordinate systems.  Each of these boxes includes a portion of $X$, but it is possible that $(I\times J)\cap X$ and $(\tilde{I}\times\tilde{J})\cap X$ do not belong to the same local component of $X$. We now describe a way to confirm that these boxes are capturing the same local component of $X$.  For ease of notation, we assume that $I\times J$ is given in the original coordinate system while $\tilde{I}\times\tilde{J}$ may be given in some other coordinate system.

{\em Inclusion.} Suppose that $\hat{x}\in I$ such that $\hat{x}\in\pi_d(\tilde{I}\times\tilde{J})$. The fiber $\pi_d^{-1}(\hat{x})\cap (I\times J)$ is $(n-2)$-dimensional and contains a unique point of $X$.  Moreover, by the proof of \Cref{thm:generalized_krawczyk}, the standard Krawczyk test succeeds on this slice (which itself is a lower-dimensional box).  Using the standard Krawczyk test or any other multivariate root isolation technique \cite{Yap:MirandaTest:2020,Cheng:CertifiedRealIsolation:2022}, this zero can be approximated to any desired accuracy.  Since we begin with a bounding region for the unique zero in the region, the drawbacks of multivariate root isolation, as identified above, are more limited.  Let $\hat{y}$ be the approximation to the point in $I\times J$ above $\hat{x}$ with accuracy $2^{2s_2}$, where $2^{s_2}$ is the length of a side of $J$.  The approximation $\hat{y}$ converges to the point of $X$ above $\hat{x}$ since $s_2\leq 0$.  If $\hat{y}$ is guaranteed to be within $\tilde{I}\times\tilde{J}$, even after accounting for errors, it follows that, locally, $I\times J$ and $\tilde{I}\times\tilde{J}$ approximate the same portion of the surface, which is connected through $X\cap(I\times J)\cap (\tilde{I}\times\tilde{J})$.  In fact, using the arguments of \Cref{thm:graphcorrectness}, we conclude that $(I\times J)\cup (\tilde{I}\times \tilde{J})$ deformation retracts onto $X\cap((I\times J)\cup (\tilde{I}\times \tilde{J}))$.  The algorithm for this test appears in Algorithm \ref{algo:inclusion}.  We note that a return value of \textsc{False} in Algorithm \ref{algo:inclusion} does not indicate that the surfaces do not intersect, only that the test was inconclusive.

\begin{algorithm}[ht]
\caption{InclusionTest}
\label{algo:inclusion}
\begin{algorithmic}[1]
\Require  
\begin{itemize}
    \item A polynomial system $F=(f_1,\dots, f_{n-d}):\mathbb{R}^n\rightarrow\mathbb{R}^{n-d}$ 
    that defines a smooth surface and
    \item two intersecting boxes $I\times J$ and $\tilde{I}\times\tilde{J}$ (in their own coordinate systems) with $I$ and $\tilde{I}$ two-dimensional squares which pass the interval Krawczyk test.
    \end{itemize}
\Ensure A boolean, where \textsc{True} indicates the same sheet of $X$ is in both boxes.
\State Set $\hat{x}$ to be a point in $I$ such that $\pi_d^{-1}(\hat{x})$ intersects $\tilde{I}\times\tilde{J}$.  
\State Set $2^{s_2+1}$ to be the side length of $J$
\State Set $\hat{y}$ to be an approximation to the zero of $F$ over $\hat{x}$ in $J$ of accuracy at least $2^{2s_2}$.
\If{$\hat{x}\times(\hat{y}+2^{2s_2}[-1,1]^{n-2})$ is contained in $\tilde{I}\times\tilde{J}$}
\State \Return \textsc{True}
\Else
\State \Return \textsc{False}
\EndIf
\end{algorithmic}
\end{algorithm}

{\em Exclusion.} If the inclusion test fails, we cannot immediately guarantee that the boxes $I\times J$ and $\tilde{I}\times\tilde{J}$ are approximating different portions of the surface, we may have merely chosen poorly with $\hat{x}$.  When the inclusion test fails, we attempt an exclusion test to see if we can confirm that $I\times J$ and $\tilde{I}\times\tilde{J}$ do not represent the same sheet of $X$, locally.  We first observe that within $I\times J$ and $\tilde{I}\times\tilde{J}$, $X$ is a graph over $I$ and $\tilde{I}$, respectively.  Applying the refinement of Section \ref{sec:extension} to each of $I$ and $\tilde{I}$, in their coordinate systems, we arrive at a collection of boxes that contain the surface.  By intersecting with $J$ and $\tilde{J}$, we can freely assume that these boxes are contained within $I\times J$ or $\tilde{I}\times\tilde{J}$.  If none of these output boxes intersect, then we may conclude that the original sheets of $X$ contained in $I\times J$ and $\tilde{I}\times\tilde{J}$ are distinct, locally.  We note that since $I\times J$ and $\tilde{I}\times\tilde{J}$ both contain the surface and have been rotated using Algorithm \ref{algo:transform}, both the second and third issues identified above have been mitigated.

If both of the inclusion and exclusion tests fail, we return to the inclusion test.  For each box of the refinement of $I\times J$, we apply the inclusion test.  If any of these inclusion tests succeed, we immediately conclude that $I\times J$ and $\tilde{I}\times\tilde{J}$ contain the same sheet of $X$, locally.  If none of the inclusion tests succeed, we apply the exclusion test on a more accurate refinement.  We continue alternating between these two tests until either the inclusion or exclusion test succeeds.  The complete algorithm for checking whether two boxes contain the same sheet of $X$, locally, appears in Algorithm \ref{algo:component}.  This argument proves the correctness part of the following theorem.

\begin{algorithm}[ht]
\caption{ComponentTest}
\label{algo:component}
\begin{algorithmic}[1]
\Require  
\begin{itemize}
    \item A polynomial system $F=(f_1,\dots, f_{n-d}):\mathbb{R}^n\rightarrow\mathbb{R}^{n-d}$ 
    that defines a smooth surface and
    \item two intersecting boxes $I\times J$ and $\tilde{I}\times\tilde{J}$ (in their own coordinate systems) with $I$ and $\tilde{I}$ two-dimensional squares which pass the interval Krawczyk test for some
    \item $\rho\in(0,1)$
    \end{itemize}
\Ensure 
\begin{itemize}
    \item A boolean, representing if the boxes intersect and
    \item refinements of $I\times J$ and $\tilde{I}\times\tilde{J}$
\end{itemize}
\State Set $R=\{I\times J\}$, a refinement of $I\times J$.
\State Set $\tilde{R}=\{\tilde{I}\times\tilde{J}\}$, a refinement of $\tilde{I}\times\tilde{J}$.
\Loop
\ForAll{$A\times B\in R$}
\If{InclusionTest$(A\times B,\tilde{I}\times\tilde{J})$=\textsc{True}}
\State \Return \textsc{True}, $R$, and $\tilde{R}$.
\EndIf
\EndFor
\State Set $\rho=\rho/2$.
\State Set $R$ to be a refinement of $I\times J$ using $\rho$ and requiring smaller bases in Algorithm \ref{algo:Graph-approx}
\State Set $\tilde{R}$ to be a refinement of $\tilde{I}\times\tilde{J}$ using $\rho$ and requiring smaller bases in Algorithm \ref{algo:Graph-approx}
\If{{\bf all} pairs $A\times B\in R$ and $\tilde{A}\times\tilde{B}\in\tilde{R}$ do not intersect}
\State \Return \textsc{False}, $R$, and $\tilde{R}$.
\EndIf
\EndLoop
\end{algorithmic}
\end{algorithm}

\begin{theorem}
Let $F$, $X$, $I\times J$ and $\tilde{I}\times\tilde{J}$ be as above.  Algorithm~\ref{algo:component} terminates where \textsc{True} indicates that the same sheet of $X$ is in both boxes, locally, and \textsc{False} indicates that the two sheets of $X$ in the boxes are not the same, locally.
\end{theorem}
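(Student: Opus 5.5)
Correctness has already been argued in the text preceding the statement. A \textsc{True} return comes from a successful InclusionTest on some refinement box $A\times B\subseteq I\times J$. In that case the certified point of $X$ over $\hat{x}$ lies in $\tilde{I}\times\tilde{J}$, so $X\cap(I\times J)$ and $X\cap(\tilde{I}\times\tilde{J})$ share a point. Each of these is a connected graph piece by \Cref{thm:generalized_krawczyk}, so locally they form the same sheet. A \textsc{False} return means that two covers of the two sheets are disjoint. The cover of $X\cap(I\times J)$ is produced by the refinement of \Cref{sec:extension}, and \Cref{thm:graphcorrectness} applies because $X$ is a graph over $I$ inside $I\times J$; likewise for the cover of $X\cap(\tilde{I}\times\tilde{J})$. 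Disjoint covers imply the sheets are disjoint. So the plan concentrates on termination, split into two cases according to whether $S\vcentcolon=X\cap(I\times J)$ and $\tilde{S}\vcentcolon=X\cap(\tilde{I}\times\tilde{J})$ intersect.

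\emph{Case 1: $S\cap\tilde{S}=\emptyset$.} Both sets are compact, being closed subsets of $X$ inside compact boxes, so $\delta\vcentcolon=\operatorname{dist}(S,\tilde{S})>0$. After the $k$-th pass of the loop, $\rho$ has become $\rho/2^k$ and the bases are forced smaller. By the refinement remark after \Cref{thm:graphcorrectness}, every output box then has diameter $O(2^{-k})$ in the base directions plus $O(\rho 2^{-k}r_2)$ in the fiber directions. Each such box meets $S$ (respectively $\tilde{S}$), since it contains the sheet over its base. Every point of a box in $R$ is therefore within $\epsilon_k\to 0$ of $S$, and every point of a box in $\tilde{R}$ is within $\epsilon_k$ of $\tilde{S}$. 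Once $2\epsilon_k<\delta$, no pair of boxes intersects and the algorithm returns \textsc{False}. The refinement itself terminates at each stage by \Cref{thm:graphcorrectness} applied to the graph over $I$ (or $\tilde{I}$).

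\emph{Case 2: $S\cap\tilde{S}\neq\emptyset$.} This is the harder case, and I expect it to be the main obstacle. The difficulty is that the intersection may be tangential or confined to the boundary of $\tilde{I}\times\tilde{J}$, and then inclusion can never be certified with a margin. I would first try to find a point $z^\ast\in S$ lying in the \emph{interior} of $\tilde{I}\times\tilde{J}$. This works if $S\cap\tilde{S}$ contains a point not on the sides of $\tilde{I}\times\tilde{J}$: such a point cannot be on the top either, by \Cref{thm:generalized_krawczyk}, and by local openness of the sheet there is then an interior point. Around $z^\ast$ there is a ball of radius $\eta>0$ inside $\tilde{I}\times\tilde{J}$. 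Once the refinement bases are small enough, some box $A\times B\in R$ has its base within $\eta/2$ of $\pi_d(z^\ast)$. InclusionTest on that box produces an enclosure of radius $2^{2s_2}\to 0$ around the true fiber point, which by continuity of the sheet lies within $\eta/2$ of $z^\ast$. That enclosure is inside $\tilde{I}\times\tilde{J}$, so the test returns \textsc{True}.

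The remaining degenerate situation is when $S$ and $\tilde{S}$ meet only on boundaries. Here I would argue that both tests keep failing only in this case, and that the statement's ``locally'' should be read with the boxes' interiors. The fix is either to adopt that convention, or to note that such contact happens only in a nongeneric configuration and can be removed by slightly enlarging $\tilde{J}$; enlarging is legitimate because the Krawczyk containment is strict, so the test still passes for a slightly larger $r_2$. I would state this as a genericity assumption if it cannot be removed cleanly.
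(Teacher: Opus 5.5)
Your termination argument is essentially the paper's. In the disjoint case, the positive distance between $S$ and $\tilde S$ together with shrinking refinements forces the exclusion test. Otherwise, a refinement box near a point of the common sheet forces the inclusion test; your $\eta$-ball around an interior point is a cleaner version of the paper's ``some box of the refinement is completely contained within $\tilde I\times\tilde J$''. The boundary-contact configuration you isolate is real, and the paper's proof passes over it silently, because that claim presupposes the common sheet meets the interior of $\tilde I\times\tilde J$. This fails only if every point of $S\cap\tilde S$ lies on the sides of both boxes, since a common point in the interior of $I\times J$ already yields nearby points of $S$ in the interior of $\tilde I\times\tilde J$.

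Two of your proposed remedies do not work, however.

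First, reading ``locally'' via interiors changes what the output means, not whether there is output. In that configuration a common point $z\in S\cap\tilde S$ lies in some closed box of $R$ and some closed box of $\tilde R$ at every stage, so the exclusion test never returns \textsc{False}. The inclusion test can succeed only through an accident of alignment: the certified fiber enclosure would have to fit inside $\tilde I\times\tilde J$ even though the true point of $S$ it contains lies on the boundary of, or outside, $\tilde I\times\tilde J$. So the loop need not terminate, however the answer is interpreted.

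Second, enlarging $\tilde J$ cannot help. Theorem~\ref{thm:generalized_krawczyk} already keeps $X$ off the top and bottom of $\tilde I\times\tilde J$, so the contact lies on the sides $\partial\tilde I\times\tilde J$. It is therefore $\tilde I$ (i.e.\ $r_1$) that must be enlarged. The strictness of the Krawczyk inclusion permits this, provided the interval extension depends continuously on the box.

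The honest conclusion is that termination holds under the non-degeneracy hypothesis that the common sheet meets the interior of $\tilde I\times\tilde J$, which is exactly what the paper's proof tacitly assumes. Calling this configuration ``nongeneric'' would still need an argument.
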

\begin{proof}
The correctness is argued above, so we focus on termination.  If the portions of $X$ in $I\times J$ and in $\tilde{I}\times\tilde{J}$ do not correspond to the same sheet, locally, then there is a positive minimum distance between the two sheets.  Once the accuracy of the refinements is less than half this minimum distance, none of the boxes in the refinement can intersect, and the algorithm terminates.  On the other hand, if the two portions of $X$ are the same sheet, locally, then, after sufficient refinement, there is some box of the refinement that is completely contained within $\tilde{I}\times\tilde{J}$.  This box passes the inclusion test since the $\hat{y}$ is in $\tilde{I}\times\tilde{J}$, even accounting for errors.    
\end{proof}

\begin{remark}
Whenever Algorithm \ref{algo:component} returns \textsc{False}, we must replace $I\times J$ and $\tilde{I}\times\tilde{J}$ with their refinements to maintain the topological correctness.  On the other hand, if the algorithm returns \textsc{True}, it is a choice whether to replace $I\times J$ and $\tilde{I}\times\tilde{J}$ with their refinements, i.e., the replacement is not required.  We note that even if the replacements are used, any new intersections between the smaller boxes do not need to be checked to confirm that they correspond to the same sheets of $X$, it is automatic from the construction.
\end{remark}

\section{Surface approximation}\label{sec:MainAlg}
We now describe the main procedure for constructing a certified approximation of a surface by patching together local certified enclosures.  We start with the setup of Section \ref{sec:IntervalKrawczyk}.  In particular, let $F=(f_1,\dots,f_{n-2})$ define a smooth surface $X=\mathcal{V}(F)\subset\mathbb{R}^n$.  Fix a positive real number $\rho<1$.  Since non-compact surfaces cannot be covered by finitely many boxes, if $X$ is not compact, we restrict our attention to a user-specified box $D\subseteq\mathbb{R}^n$, and cover $X\cap D$.  Let $\hat{z}\in\mathbb{R}^n$ be an initial point that approximates a point of $X$ in the sense that we may improve the approximation $\hat{z}$, if needed.

The main algorithm proceeds by constructing and patching together local certified enclosures of $X$.  Concretely, starting at $\hat{z}$, we construct a box around $X$ which satisfies the interval Krawczyk test.  From the sides of this box, we construct additional boxes, growing the approximation outward.  This construction continues iteratively until the entire surface is covered.  Once the approximation is complete, we apply a post-processing step to guarantee topological properties of the output.

\subsection{Constructing boxes}

The algorithm maintains a queue of active boxes, for which extension is possible.  The main step in adding a box is, given a point $\hat{z}$, construct a box around $\hat{z}$ that satisfies the Krawczyk test.  We give our approach in the \emph{AddBox} algorithm.

\begin{algorithm}[ht]
\caption{AddBox}
\label{algo:addbox}
\begin{algorithmic}[1]
\Require  
\begin{itemize}
    \item A polynomial system $F=(f_1,\dots, f_{n-d}):\mathbb{R}^n\rightarrow\mathbb{R}^{n-d}$ that defines a smooth surface,
    \item a point $\hat{z}$ that can be refined to a point on $X$, 
    \item a positive real number $r$, and 
    \item a positive real number $\rho$.
    \end{itemize}
\Ensure A box containing $\hat{z}$ that satisfies the Krawczyk test.
\Loop
\State Set $(F,\hat{z},U,V^*)=\text{UnitaryTransformation}(F,\hat{z})$.
\State Replace $\hat{z}$ by an improved approximation within $r/2$ of the point of convergence, $z^\ast$.
\State Set $\hat{A}$ to approximate the inverse of the Jacobian at $\hat{z}$ as in Remark \ref{rem:Achoice}.
\If{IntervalKrawczykTest$(F,\hat{z},(r,r),\hat{A},\rho)$=\textsc{True}}
\State \Return $\hat{z}+r[-1,1]^n$
\EndIf
\State Set $r$ to be $r/2$
\State Improve the approximation of $\hat{z}$ to an improved approximation within $r/2$ of the point of convergence, $z^\ast$.
\EndLoop
\State \Return final box
\end{algorithmic}
\end{algorithm}

By a similar argument as the termination of Algorithm \ref{algo:Graph-approx}, when $\hat{z}$ and $\hat{A}$ are sufficiently accurate, the interval Krawczyk test succeeds.  We note that since we have applied a unitary transformation and can improve the approximation of $\hat{z}$ arbitrarily well, all terms in the Krawczyk test approach zero and Algorithm \ref{algo:addbox} eventually succeeds.

The initial step of our algorithm is to use the \Cref{algo:addbox} to find a box containing $\hat{z}$ and satisfying the Krawczyk test.  The algorithm then maintains a queue consisting of pairs of boxes of the form $I\times J$ and subsets of $\partial I$.  The box $I\times J$ is the box itself (in its coordinate system) and the subset of $\partial I$ represents the how much of the curve $X\cap (\partial I\times J)$ remains to be covered by neighboring boxes.  Once all of $\partial I$ has been covered, the box $I$ is removed from the queue.

In each iteration of the algorithm, a box $I\times J$ and a subset $A\subseteq \partial I$ are popped from the queue.  A point $\hat{x}\in A$ is chosen, and its corresponding preimage $\pi_d^{-1}(\hat{z})\cap (I\times J)$ is approximated by $\hat{y}$.  Then a box $\tilde{I}\times \tilde{J}$ is constructed containing $\hat{z}=(\hat{x},\hat{y})$.  Finally, for all the boxes that intersect $\tilde{I}\times\tilde{J}$, their subsets of $\partial I$ are recalculated, and, unless $A$ is not empty, $I\times J$ and the newly updated $A$ are pushed into the queue.  In addition, a similar calculation is executed for $\tilde{I}\times\tilde{J}$ and $\partial\tilde{I}$, which are also pushed into the queue, if $\partial\tilde{I}$ is nonempty.

\subsection{Covering the surface}

We now provide details about the pairing of a box $I\times J$ and the subset of $\partial I$.  The idea of the pairing is $I\times J$ contains a portion of~$X$, and we have seen that, restricted to the sides of $I\times J$, $X$ is a curve.  On all sides of $I\times J$, we need to have enough boxes to cover this curve.  Once this curve is covered by boxes, these neighboring boxes continue the surface approximation beyond $I\times J$.  There is a bijection between this curve in the sides of $I\times J$ and $\partial I$.  The idea is that the subset of $\partial I$ keeps track of the part of this curve which we cannot yet confirm to be covered.  

We start with the setup above, a box $I\times J$, a point $\hat{z}$ in the boundary of $I\times J$ such that $\pi_d(\hat{z})\in A$, and a box $\tilde{I}\times\tilde{J}$ 
containing~$\hat{z}$ and satisfying the interval Krawczyk test.  At this point, we know that $I\times J$ and $\tilde{I}\times\tilde{J}$ approximate the same local sheet of $X$ because they share the point $\hat{z}$.  For any other boxes $I'\times J'$ that intersect $\tilde{I}\times\tilde{J}$, we apply the component test.  If the test fails, we replace $I'\times J'$ by the refinement from the test, improve the approximation on $\hat{z}$, and repeat the construction of the box $\tilde{I}\times\tilde{J}$ using $r/2$. 

Once this construction succeeds, we have two boxes both containing~$\hat{z}$ and the point that $\hat{z}$ converges to $z^\ast$.  We now apply the interval Krawczyk test within the facet of $I\times J$.  In particular, let $r$ be some value so that the distance between $\hat{z}$ and $z^\ast$ is less than $r$ and $\hat{z}+r[-1,1]^{d-1}$ is within the intersection of the facet of $I\times J$ with the box $\tilde{I}\times\tilde{J}$.  For simplicity, let $\ell$ be the equation of the facet of $I\times J$ containing $\hat{z}$.  We apply AddBox to $F\cup\{\ell\}$, $\hat{z}$, $r$, and $\rho$.  This constructs a box $B$ within the facet of $I\times J$ and within the box $\tilde{I}\times\tilde{J}$.  Moreover, by the interval Krawczyk test, for every point in $\pi_d(B)$, there is a point of $X$ in $B$ above that point.  Therefore the curve of interest passes through $B$ and this curve of interest is both on the boundary of $I\times J$ and within $\tilde I\times\tilde J$.  Therefore, we may remove~$\pi_d(B)$ from $A$ as this portion of the curve of interest is covered.

In the noncompact case, we also remove portions of $\partial I$ which lie outside the region $D$.  Since this case is easier, we leave the details to the reader. We summarize the entire construction in Algorithm~\ref{algo:CertifiedSurfaceApproximation}.

\begin{algorithm}[ht]
\caption{CertifiedSurfaceApproximation}
\label{algo:CertifiedSurfaceApproximation}
\begin{algorithmic}[1]
\Require  
\begin{itemize}
    \item A polynomial system $F=(f_1,\dots,f_{n-d}):\mathbb{R}^n\to\mathbb{R}^{n-d}$ defining a smooth surface $X=\mathcal{V}(F)$,
    \item an initial point $\hat{z}\in\mathbb{R}^n$ approximating a point of $X$,
    \item an initial positive radius $r\in\mathbb{Q}$,  
    \item a constant $\rho\in(0,1)$, and,
    \item if needed, a compact region of interest $D\subseteq\mathbb{R}^n$.
\end{itemize}
\Ensure A finite collection of boxes whose union encloses $X$ or $X\cap D$.
\State Compute a starting box $I\times J$ from AddBox$(F,\hat{z},r,\rho)$.
\State Set $A$ to be $\partial I$ minus any fibers over $\partial I$ which are completely outside of $D$.
\State Set $\mathcal{Q}=\{(I\times J,A)\}$.
\While{$\mathcal{Q}\not=\emptyset$}
\State Pop $(I\times J,A)$ from $\mathcal{Q}$.
\State Set $\hat{x}$ to be any point in $A$.
\State Set $\hat{z}$ to approximate the point $\pi_d^{-1}(\hat{x})\cap I\times J$.
\State Set $s=2r$
\Repeat
\State $s=s/2$.
\State Set $\tilde{I}\times\tilde{J}$ from AddBox$(F,\hat{z},s,\rho)$.
\ForAll{Boxes $I'\times J'$ intersecting $\tilde{I}\times\tilde{J}$}
\If{ComponentTest$(F,\tilde{I}\times\tilde{J},I'\times J')=\textsc{False}$}
\State Replace $I'\times J'$ with a refinement 
\State Update subsets of boundaries of refinement.
\EndIf
\EndFor
\Until{All ComponentTests return $\textsc{True}$}
\ForAll{Boxes $I'\times J'$ intersecting $\tilde{I}\times\tilde{J}$}
\State Update subset of boundary of $I'\times J'$
\State Update subset of boundary of $\tilde{I}\times\tilde{J}$
\EndFor
\If{Subset of boundary of $\tilde{I}\times\tilde{J}\not=\emptyset$}
\State Add $\tilde{I}\times\tilde{J}$ and its subset of boundary to queue.
\EndIf
\State Remove boxes from $\mathcal{Q}$ whose boundary subset is empty.
\EndWhile
\State \Return all final boxes produced by the algorithm.
\end{algorithmic}
\end{algorithm}

\begin{theorem}
Let $F$, $X$, $\hat{z}$, $r$, $\rho$, and $D$ (if needed) as above.  Algorithm \ref{algo:CertifiedSurfaceApproximation} terminates.
\end{theorem}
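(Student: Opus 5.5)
The proof splits into two parts. First, every call to a subroutine terminates. Second, only finitely many boxes are ever created, so the main while-loop exits.

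\emph{Subroutine termination.} AddBox terminates by the argument given after Algorithm~\ref{algo:addbox}. After the unitary transformation the Jacobian block used in the test is close to $\Sigma$ restricted to its invertible part, and $\hat{z}$ can be refined arbitrarily close to $z^\ast\in X$. Hence both $-\hat{A}\square F(I,\pi_{-d}(\hat z))$ and $(Id_{n-d}-\hat A\square J_F(I,J))(J-\pi_{-d}(\hat z))$ are $o(r)$ as $r\to0$. So for $r$ small enough $\|K\|<\rho r$, and the test passes, exactly as in the proof of Theorem~\ref{thm:graphcorrectness}. ComponentTest terminates by the preceding theorem.

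The inner repeat-loop also terminates. Each failed ComponentTest halves $s$. Once $s$ is below a threshold, $\tilde I\times\tilde J$ lies in a neighborhood of $z^\ast$ in which $X$ is a single graph over the tangent plane. For $s$ smaller than this threshold, every box $I'\times J'$ meeting $\tilde I\times\tilde J$ whose sheet differs locally from the one through $z^\ast$ has already been refined away. This is because distinct local sheets are at positive distance and ComponentTest's refinements shrink to within that distance. The remaining boxes then pass the inclusion test. Only finitely many boxes exist at any time, so finitely many such refinements occur.

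\emph{Finiteness of the number of boxes.} This is the main obstacle. My plan is a uniform lower bound on box sizes from compactness of $X$ (or $X\cap D$). Smoothness together with compactness gives a uniform radius $r_0>0$ with two properties. First, at every $z\in X$, after the SVD rotation at $z$, $X$ is a graph with small slope over the disc of radius $r_0$ in the tangent coordinates. Second, the Krawczyk test succeeds for all boxes of radius $r\le r_0$ centred within $r/2$ of $z$. I would get these by making the $o(r)$ estimates above uniform through continuity of $J_F$ and of the SVD on a compact neighbourhood.

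Given this bound, AddBox never halves below $r_0/2$. The repeat-loop halving $s$ is likewise bounded below, since two distinct sheets within distance of each other is prevented by a positive reach-type separation constant. Refinements in ComponentTest also stop at a uniform scale. So every box has radius at least some $c>0$, all boxes lie in a bounded region, and there are finitely many boxes.

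It remains to show the queue does not cycle forever on the same boxes. Each iteration removes from $A$ a set $\pi_d(B)$, where $B$ is the facet box built by AddBox applied to $F\cup\{\ell\}$. That box is also of uniform size by the same compactness argument applied to the curves $X\cap(\partial I\times J)$. So each boundary subset empties after finitely many steps, and the algorithm terminates.
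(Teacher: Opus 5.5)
Your route matches the paper's: uniform success of the interval Krawczyk test at small scales, a separation constant controlling ComponentTest and the repeat-loop, a uniform lower bound on what each facet box $B$ removes from $A$, then compactness. The gap is in the final counting step.

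The inference ``every box has radius at least $c>0$ and all boxes lie in a bounded region, so there are finitely many boxes'' is false. Boxes here overlap by construction, since each $\tilde I\times\tilde J$ is centred near a point on a side of an existing box. A bounded region can hold infinitely many overlapping boxes of radius $c$. Your last paragraph does not repair this. It shows that a fixed box $I\times J$ is popped only finitely often, because the chosen points $\hat x\in A$ are separated by the uniform size of the facet boxes $B$. But every pop creates a new box $\tilde I\times\tilde J$, which enters the queue with its own, generally nonempty, boundary subset. So ``finitely many pops per box'' bounds the run only if you already know the total number of boxes is finite, and that is exactly what the invalid step was meant to supply.

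The missing ingredient is a global progress measure; the paper's closing step plays this role: each added box covers a uniformly large new piece of boundary, and compactness bounds how many such pieces are needed. Concretely, you need to show that each newly created box covers a uniformly sized part of $X$ (or $X\cap D$) not already covered by earlier boxes. One way is to require that the boundary updates remove from every $A$ each point whose curve point lies at some fixed depth $\delta<c$ inside another box. Then new centres are uniformly separated from earlier ones, and a packing argument in the compact set $X\cap D$ bounds the number of boxes.

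Separately, your claim that ComponentTest refinements stop at a uniform scale, which the paper also asserts, needs more than a reach bound. Suppose two boxes overlap only away from $X$. Then InclusionTest can never succeed, so ComponentTest can only return \textsc{False}, even when the boxes contain adjacent pieces of the same sheet. The refinement this requires is set by the gap between those pieces, and that gap has no uniform lower bound.
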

\begin{proof}
Similar to the proof of Theorem \ref{thm:graphcorrectness}, when the boxes get sufficiently small, the interval Krawczyk test succeeds.  Therefore, there is a global lower bound on the size of a box where the Krawczyk test fails.  Similarly, there is a lower bound on the distance between distinct sheets of $X$.  Hence, there is a global lower bound on the size of a box where the component test may fail.  Finally, there is an upper bound on the speed at which the tangent spaces change.  Therefore, there is a global lower bound on the boxes that occur in the construction of Algorithm~\ref{algo:CertifiedSurfaceApproximation}.  Thus, there is a lower bound on the area of boundary that is covered by each box that is added through the algorithm.  Then, by compactness, a finite number of boxes needed until the surface is covered.
\end{proof}

\subsection{Correctness}

To ensure the topological correctness of the entire construction, we can apply a few final processing steps.  For any two boxes $I\times J$ and $\tilde{I}\times\tilde{J}$ that intersect, apply ComponentTest$(F,I\times J,\tilde{I}\times\tilde{J},\rho)$ to the boxes.  If this returns \textsc{False}, we know that the part of the overlap between $I\times J$ and $\tilde{I}\times\tilde{J}$ does not include the surface.  Therefore, the intersection $(I\times J)\cap (I'\times J')$ can be removed from the approximation.  If these changes create any disconnected components of the approximation, then one of the components does not contain zeros of $F$, and can be removed. The local correctness implies that we have constructed an enclosing neighborhood of the surface such that any two intersecting boxes contain the same portion of the surface, i.e., the global structure is maintained.

\section{Implementations and examples}\label{sec:Implementation}

We provide a preliminary Julia implementation of our algorithms. For interval arithmetic, we use \texttt{IntervalArithmetic.jl} \cite{IntervalArithmetic.jl}. In this section, we present examples of certified surface approximations to illustrate the proposed algorithms. Throughout these examples, we use an initial radius of $r = 0.1$. Starting from an input point sufficiently close to the surface, we first generate an initial interval box that passes the generalized interval Krawczyk test.  Throughout the examples, we colored the initial box in red.




All examples are displayed using \texttt{MeshLab} \cite{LocalChapterEvents:ItalChap:ItalianChapConf2008:129-136}. The source code for the implementation is available at
\begin{center}
\url{https://github.com/klee669/certified_surface_approximations}
\end{center}

\begin{example}[A unit sphere]
We approximate the unit sphere defined by $x^2 + y^2 + z^2 - 1 = 0$, starting from the initial point $(0,0,1)$ and using $\rho = \frac{1}{8}$. The algorithm terminates with $5975$ certified interval boxes. The resulting approximation is shown in \Cref{fig:unit_sphere}.

\begin{figure}[t]
    \centering
    \includegraphics[scale=.15]{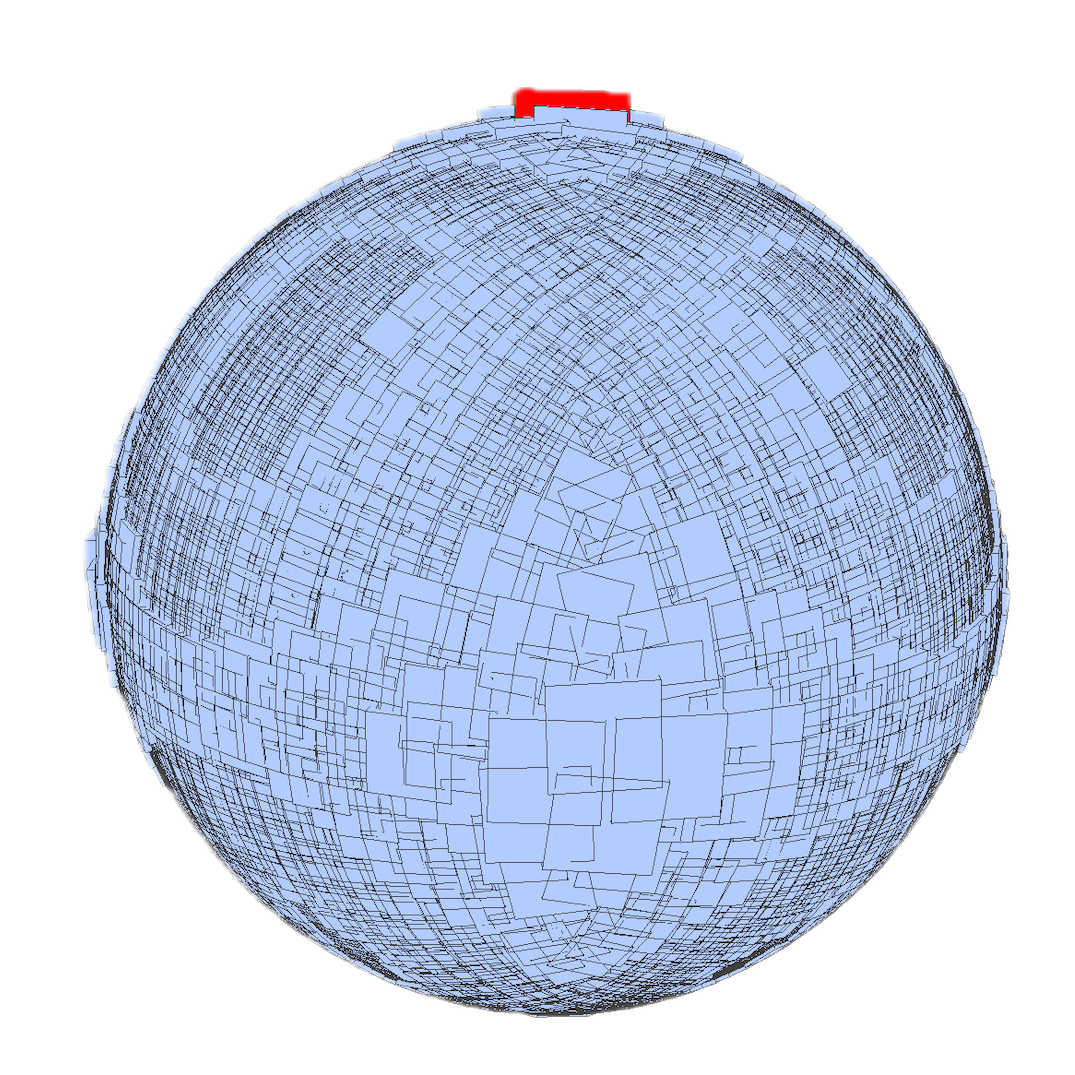}
    \caption{An approximation of a unit sphere $\bm{x^2+y^2+z^2-1=0}$ with an initial point $\bm{\hat{z}=(0,0,1)}$ and $\bm{\rho=\frac{1}{8}}$.}
    \label{fig:unit_sphere}
\end{figure}

\end{example}

\begin{example}[Effect of refinement]
To illustrate the effect of refinement, we approximate the sphere $x^2 + y^2 + z^2 - 10 = 0$ using two different values of $\rho$, namely $\rho = \frac{1}{8}$ and $\rho = \frac{1}{80}$ at an initial point $(0,0, 3.16)$. To better visualize the coverage of the surface, we limit the computation to at most $2000$ interval boxes in both cases.

For $\rho = \frac{1}{8}$, the computation terminates with an average box radius of approximately $0.1453$. For $\rho = \frac{1}{80}$, the average box radius is approximately $0.02510$. As expected, a smaller value of $\rho$ yields a finer approximation, see \Cref{fig:sphere_comparison1}. 

For comparison, \Cref{fig:sphere_comparison2} shows the approximation with $\rho=\frac{1}{8}$ at $(0,0,3.16)$, and that with $\rho=\frac{1}{80}$ at $(0,0,-3.16)$ at once.

\begin{figure}[t]
    \centering
    \includegraphics[scale=.08]{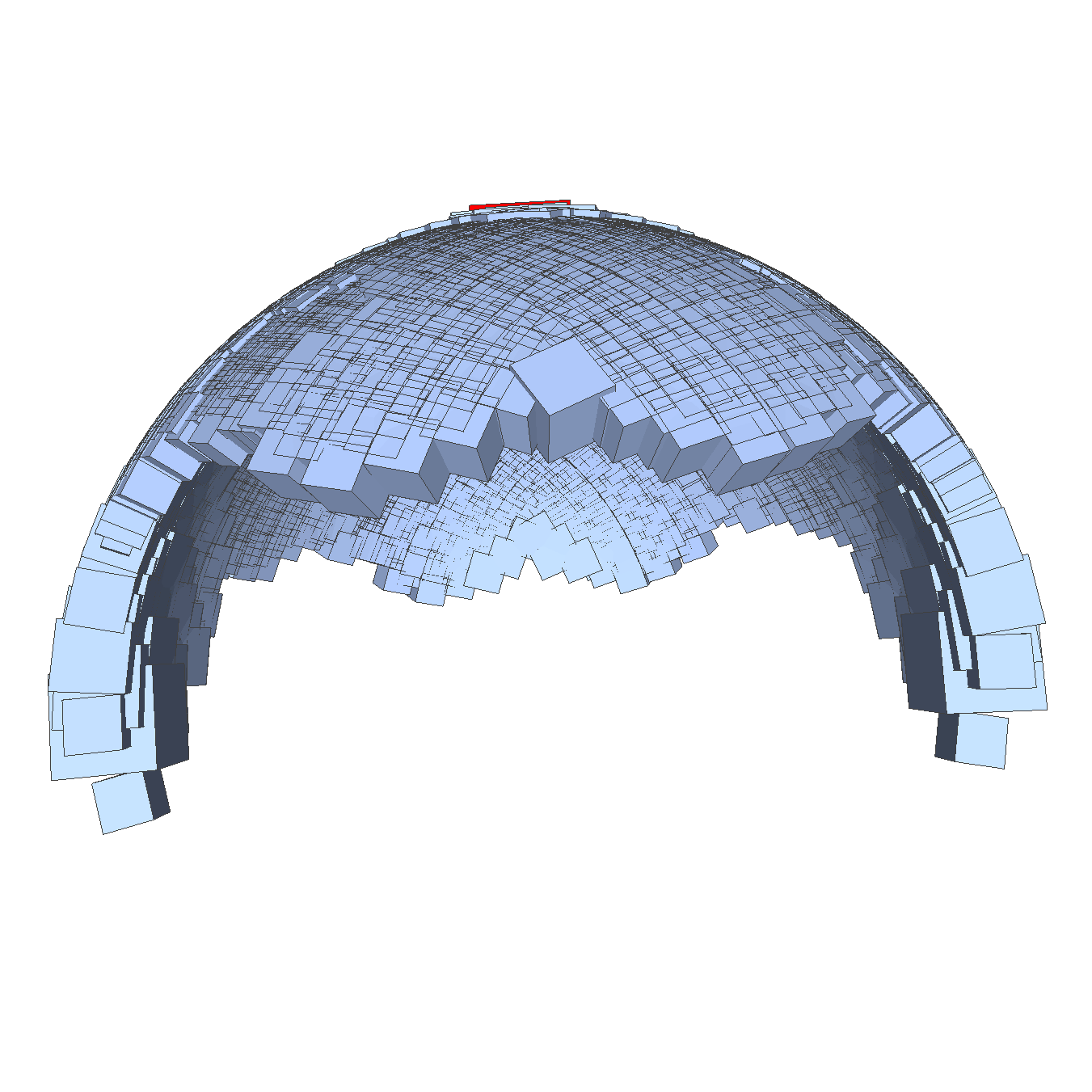}
    \hspace{.5em}
    \includegraphics[scale=.08]{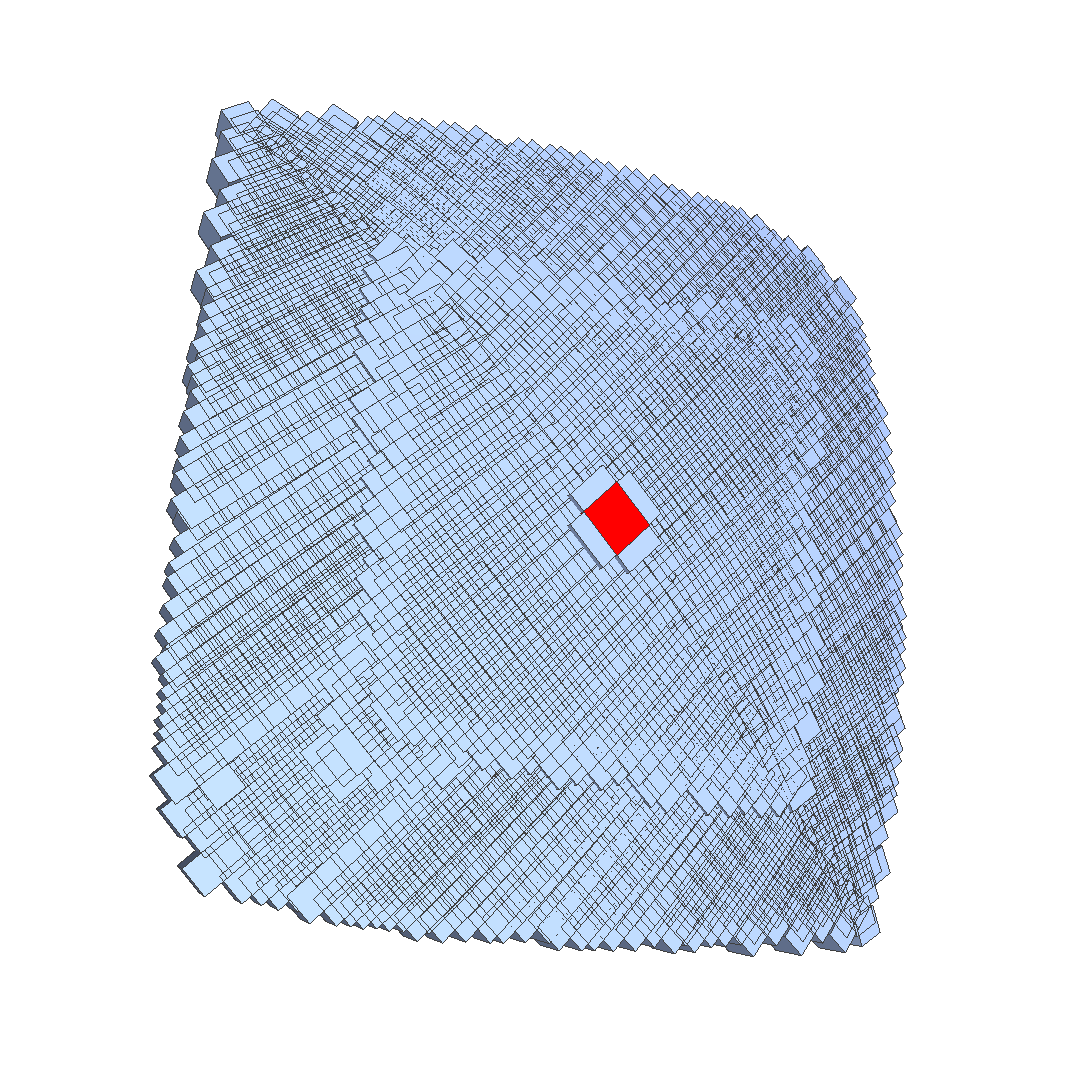}
    \caption{Approximations of a sphere $\bm{x^2+y^2+z^2-10=0}$ with different $\bm{\rho=\frac{1}{8}}$ (left) and $\bm{\frac{1}{80}}$ (right) at an initial point $\bm{\hat{z}=(0,0,3.16)}$. Each approximation shows $\bm{2000}$ interval boxes respectively.}
    \label{fig:sphere_comparison1}
\end{figure}
\begin{figure}[t]
    \centering
    \includegraphics[scale=.15]{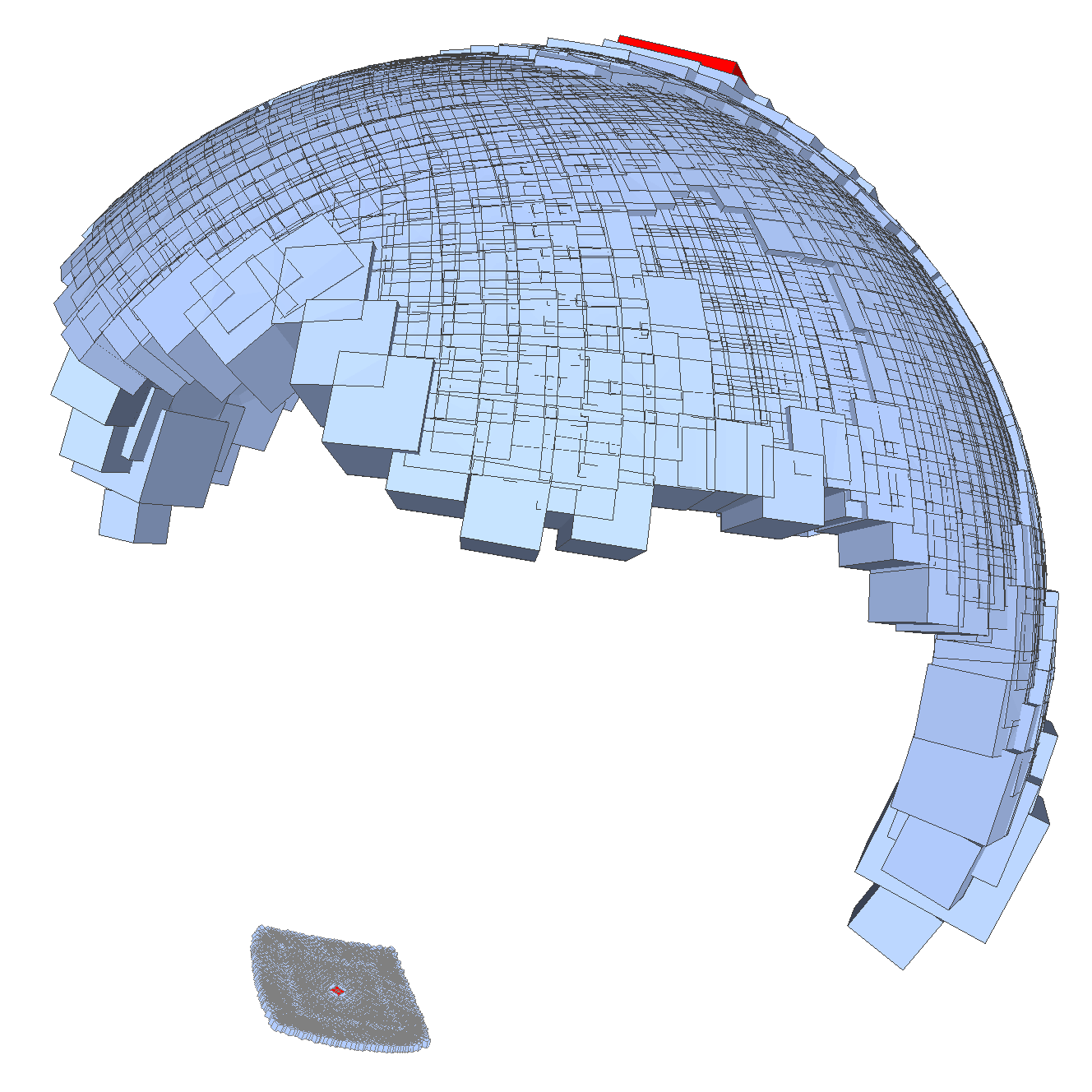}
    \caption{Approximations of a sphere $\bm{x^2+y^2+z^2-10=0}$ with different $\bm{\rho=\frac{1}{8}}$ at $\bm{\hat{z}=(0,0,3.16)}$ and with $\bm{\frac{1}{80}}$ at an initial point $\bm{\hat{z}=(0,0,-3.16)}$. Each approximation shows $\bm{2000}$ interval boxes respectively.}
    \label{fig:sphere_comparison2}
\end{figure}

\end{example}

\begin{example}[Torus]
    We approximate the torus defined by an equation $(\sqrt{x^2 + y^2} - 2)^2 + z^2 - 0.64 = 0$, starting from the initial point $(2.8,0,0)$. We use $\rho = \frac{7}{8}$, and the algorithm produces an approximation consisting of $2400$ certified interval boxes.  The resulting approximation is in \Cref{fig:torus}.  Although not visible in the image, the interior of the torus is a tube, as expected.

\end{example}

\begin{example}[Saddle surface]
As a non-compact example, we approximate the saddle surface defined by $-0.125 x y^2 + 0.25 x^2 - z = 0$
starting from the initial point $(2,2,0)$. We use $\rho = \frac{7}{8}$. Since the surface is non-compact, we restrict the computation to $2400$ certified interval boxes.  See \Cref{fig:saddle}.
    
\end{example}

\begin{figure}[h]
    \centering
    \includegraphics[scale=.15]{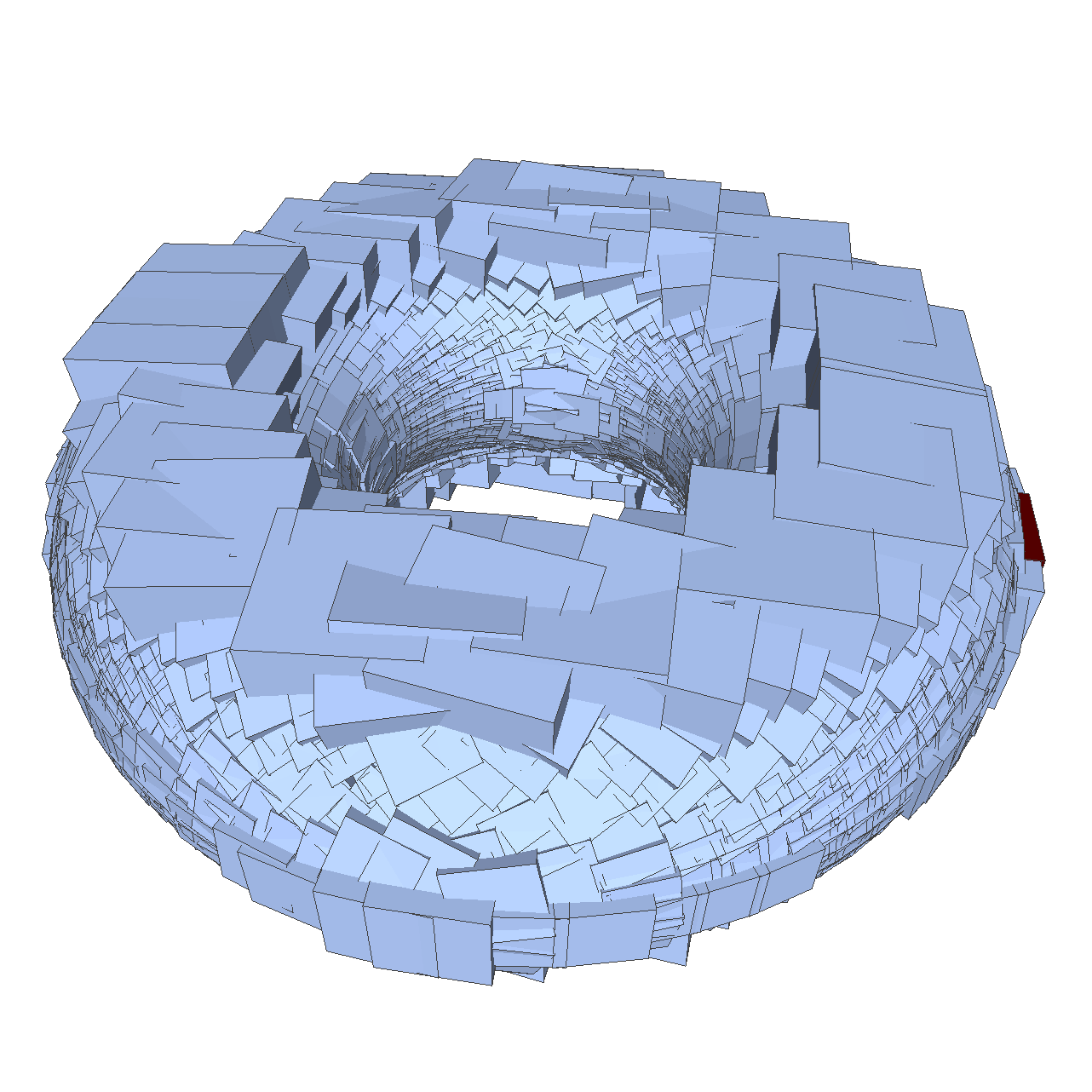}
    \caption{An approximation of a torus $\bm{(\sqrt{x^2 + y^2} - 2)^2 + z^2 - 0.64 = 0}$ with an initial point $\bm{\hat{z}=(2.8,0,0)}$ and $\bm{\rho=\frac{7}{8}}$.}
    \label{fig:torus}
\end{figure}

\begin{figure}[h]
    \centering
    \includegraphics[scale=.15]{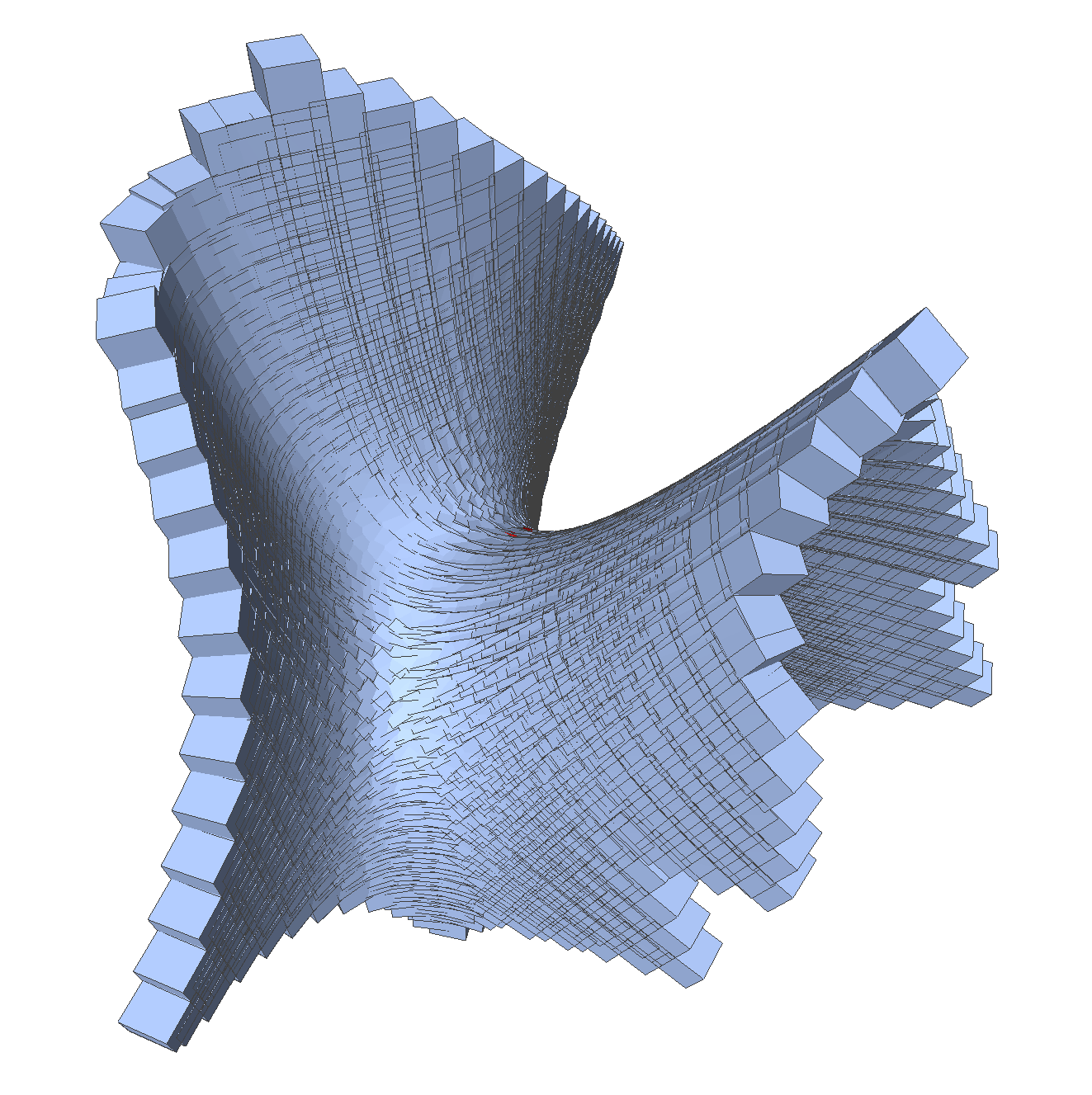}
    \caption{An approximation of a saddle surface $\bm{-0.125 x y^2 + 0.25 x^2 - z = 0}$ with an initial point $\bm{\hat{z}=(2,2,0)}$ and $\bm{\rho=\frac{7}{8}}$.}
    \label{fig:saddle}
\end{figure}
\bibliography{ref}

\begin{thebibliography}{10}

\bibitem{beltran2012certified}
C.~Beltr{\'a}n and A.~Leykin.
\newblock Certified numerical homotopy tracking.
\newblock {\em Experimental Mathematics}, 21(1):69--83, 2012.

\bibitem{burr2025certified}
M.~Burr, M.~Byrd, and K.~Lee.
\newblock Certified algebraic curve projections by path tracking.
\newblock In {\em Proceedings of the 2025 International Symposium on Symbolic
  and Algebraic Computation}, pages 87--96, 2025.

\bibitem{Cheng:CertifiedRealIsolation:2022}
J.-S. Cheng and J.~Wen.
\newblock Certified numerical real root isolation of zero-dimensional
  multivariate real nonlinear systems.
\newblock Technical Report arXiv:2211.05266 [cs.CG], 2022.

\bibitem{LocalChapterEvents:ItalChap:ItalianChapConf2008:129-136}
P.~Cignoni, M.~Callieri, M.~Corsini, M.~Dellepiane, F.~Ganovelli, and
  G.~Ranzuglia.
\newblock {MeshLab: an Open-Source Mesh Processing Tool}.
\newblock In V.~Scarano, R.~D. Chiara, and U.~Erra, editors, {\em Eurographics
  Italian Chapter Conference}. The Eurographics Association, 2008.

\bibitem{duff2024certified}
T.~Duff and K.~Lee.
\newblock Certified homotopy tracking using the {K}rawczyk method.
\newblock In {\em Proceedings of the 2024 International Symposium on Symbolic
  and Algebraic Computation}, pages 274--282, 2024.

\bibitem{guillemot2024validated}
A.~Guillemot and P.~Lairez.
\newblock Validated numerics for algebraic path tracking.
\newblock In {\em Proceedings of the 2024 International Symposium on Symbolic
  and Algebraic Computation}, pages 36--45, 2024.

\bibitem{Hauenstein:2016}
J.~D. Hauenstein and A.~C. Liddell.
\newblock Certified predictor–corrector tracking for {N}ewton homotopies.
\newblock {\em Journal of Symbolic Computation}, 74:239--254, 2016.

\bibitem{krawczyk1969newton}
R.~Krawczyk.
\newblock Newton-{A}lgorithmen zur {B}estimmung von {N}ullstellen mit
  {F}ehlerschrank.
\newblock {\em Computing}, 4:187--201, 1969.

\bibitem{Yap:MirandaTest:2020}
J.-M. Lien, V.~Sharma, G.~Vegter, and C.~Yap.
\newblock Isotopic arrangement of simple curves: An exact numerical approach
  based on subdivision.
\newblock In {\em International Congress on Mathematical Software}, pages
  277--282. Springer, 2014.

\bibitem{moore2009introduction}
R.~E. Moore, R.~B. Kearfott, and M.~J. Cloud.
\newblock {\em Introduction to interval analysis}.
\newblock SIAM, 2009.

\bibitem{rump1983solving}
S.~M. Rump.
\newblock Solving algebraic problems with high accuracy.
\newblock In {\em A new approach to scientific computation}, pages 51--120.
  Elsevier, 1983.

\bibitem{IntervalArithmetic.jl}
D.~P. Sanders and L.~Benet.
\newblock Interval{A}rithmetic.jl, 2014.

\bibitem{Smale:1993}
M.~Shub and S.~Smale.
\newblock Complexity of {B}ezout’s theorem {I}: Geometric aspects.
\newblock {\em Journal of American Mathematical Society}, 6(2):459--501, 1993.

\bibitem{SmaleAlphaTheory}
S.~Smale.
\newblock Newton's method estimates from data at one point.
\newblock In {\em The merging of disciplines: new directions in pure, applied,
  and computational mathematics ({L}aramie, {W}yo., 1985)}, pages 185--196.
  Springer, New York, 1986.

\end{thebibliography}
\bibliographystyle{abbrv}

\end{document}